\documentclass[11pt,reqno]{amsart}
\usepackage{amsmath, amsthm, amssymb}
\usepackage{enumitem}
\usepackage{graphicx} 
\usepackage{bbm}

\usepackage{fullpage}
\usepackage{empheq}
\usepackage[many]{tcolorbox}
\usepackage{xcolor}
\usepackage{wasysym}
\usepackage{mathtools}
\usepackage{scalerel}
\usepackage{array}
\usepackage{tikz}
\usetikzlibrary{cd}
\usetikzlibrary{calc}
\usetikzlibrary{decorations,decorations.pathreplacing,decorations.markings,decorations.pathmorphing}
\tikzstyle{snake}=[decorate, decoration={snake, segment length=1mm, amplitude=.5mm}]

\tikzset{super thick/.style={line width=3pt}}
\tikzstyle{far>}=[decoration={markings, mark=at position 0.75 with {\arrow{>}}}, postaction={decorate}]
\tikzstyle{mid>}=[decoration={markings, mark=at position 0.55 with {\arrow{>}}}, postaction={decorate}]
\tikzstyle{mid<}=[decoration={markings, mark=at position 0.55 with {\arrow{<}}}, postaction={decorate}]
\tikzset{super thick/.style={line width=3pt}}
\tikzstyle{far>}=[decoration={markings, mark=at position 0.75 with {\arrow{>}}}, postaction={decorate}]
\tikzstyle{mid>}=[decoration={markings, mark=at position 0.55 with {\arrow{>}}}, postaction={decorate}]
\tikzstyle{mid<}=[decoration={markings, mark=at position 0.55 with {\arrow{<}}}, postaction={decorate}]
\tikzstyle{knot}=[preaction={super thick, white, draw}]
\tikzstyle{coupon}=[draw, very thick, rectangle, rounded corners=5pt]
\tikzset{Rightarrow/.style={double equal sign distance,>={Implies},->},
triplecd/.style={-,preaction={draw,Rightarrow}},
quadruplecd/.style={preaction={draw,Rightarrow,
shorten >=0pt
},
shorten >=1pt,
-,double,double
distance=0.2pt}}
\tikzset{
    tripleline/.style args={[#1] in [#2] in [#3]}{
        #1,preaction={preaction={draw,#3},draw,#2}
    }
}
\tikzstyle{triple}=[tripleline={[line width=.15mm,black] in
      [line width=.7mm,white] in
      [line width=1mm,black]}] 
\tikzset{
    quadrupleline/.style args={[#1] in [#2] in [#3] in [#4]}{
        #1,preaction={preaction={preaction={draw,#4},draw,#3}, draw,#2}
    }
}
\tikzstyle{quadruple}=[quadrupleline={[line width=.3mm,white] in
      [line width=.6mm,black] in
      [line width=1.2mm,white] in
      [line width=1.5mm,black]}]

\definecolor{violet}{RGB}{148,0,211}
\definecolor{DarkGreen}{RGB}{0,150,0}
\definecolor{rufous}{HTML}{A81C07}
\definecolor{boysenberry}{HTML}{873260}
\definecolor{OliveGreen}{HTML}{6D712E}
\definecolor{yellow}{RGB}{200, 120, 60}

\usepackage[plainpages=false,hypertexnames=false,pdfpagelabels]{hyperref}
\definecolor{medium-blue}{rgb}{0,0,.8}
\hypersetup{colorlinks, linkcolor={purple}, citecolor={medium-blue}, urlcolor={medium-blue}}
\newcommand{\arxiv}[1]{\href{http://arxiv.org/abs/#1}{\tt arXiv:\nolinkurl{#1}}}
\newcommand{\arXiv}[1]{\href{http://arxiv.org/abs/#1}{\tt arXiv:\nolinkurl{#1}}}

\newcommand{\doi}[1]{\href{http://dx.doi.org/#1}{{\tt DOI:#1}}}

\DeclareMathOperator{\End}{End}

\DeclareMathOperator{\ev}{ev}

\DeclareMathOperator{\Hom}{Hom}
\DeclareMathOperator{\id}{id}

\DeclareMathOperator{\tr}{tr}

\DeclareMathOperator{\op}{op}
\newcommand{\core}{\iota_0}

\newcommand{\Mod}{\mathsf{Mod}}

\newcommand{\Vect}{\mathsf{Vect}}
\newcommand{\Hilb}{\mathsf{Hilb}}

\newcommand{\Cat}{\mathsf{Cat}}
\newcommand{\Alg}{\mathsf{Alg}}
\newcommand{\ol}{\overline}

\newcommand{\Z}{\mathbb{Z}}
\newcommand{\R}{\mathbb{R}}
\newcommand{\C}{\mathbb{C}}
\newcommand{\Va}{\mathcal{V}}
\newcommand{\Ca}{\mathcal{C}}

\newcolumntype{C}[1]{>{\centering\arraybackslash}p{#1}}

\def\semicolon{;}
\def\applytolist#1{
    \expandafter\def\csname multi#1\endcsname##1{
        \def\multiack{##1}\ifx\multiack\semicolon
            \def\next{\relax}
        \else
            \csname #1\endcsname{##1}
            \def\next{\csname multi#1\endcsname}
        \fi
        \next}
    \csname multi#1\endcsname}

\def\calc#1{\expandafter\def\csname c#1\endcsname{{\mathcal #1}}}
\applytolist{calc}QWERTYUIOPLKJHGFDSAZXCVBNM;
\def\bbc#1{\expandafter\def\csname bb#1\endcsname{{\mathbb #1}}}
\applytolist{bbc}QWERTYUIOPLKJHGFDSAZXCVBNM;
\def\bfc#1{\expandafter\def\csname bf#1\endcsname{{\mathbf #1}}}
\applytolist{bfc}QWERTYUIOPLKJHGFDSAZXCVBNM;
\def\sfc#1{\expandafter\def\csname s#1\endcsname{{\sf #1}}}
\applytolist{sfc}QWERTYUIOPLKJHGFDSAZXCVBNM;
\def\fc#1{\expandafter\def\csname f#1\endcsname{{\mathfrak #1}}}
\applytolist{fc}QWERTYUIOPLKJHGFDSAZXCVBNM;
\def\rmc#1{\expandafter\def\csname rm#1\endcsname{{\mathrm #1}}}
\applytolist{rmc}QWERTYUIOPLKJHGFDSAZXCVBNM;

\numberwithin{equation}{section}

\theoremstyle{plain}
\newtheorem{thm}[equation]{Theorem}
\newtheorem*{thm*}{Theorem}
\newtheorem{cor}[equation]{Corollary}
\newtheorem{lem}[equation]{Lemma}

\newtheorem{conj}[equation]{Conjecture}
\newtheorem*{claim*}{Claim}

\theoremstyle{definition}
\newtheorem{defn}[equation]{Definition}

\newtheorem*{trick*}{Trick}

\newtheorem{ex}[equation]{Example}

\newtheorem{rem}[equation]{Remark}
\newtheorem*{rem*}{Remark}

\title{The many faces of higher Hilbert spaces
}
\author{Giovanni Ferrer, Lukas M\"uller, David Penneys, and Luuk Stehouwer}
\date{\today}

\begin{document}

\begin{abstract}
Finite-dimensional operator algebras can be viewed as $\rmC^*$, $\rm\rmW^*$, or $\rm\rmH^*$-algebras, leading to different notions for their categories of modules and correspondence 2-categories.
In this article, we show how these differences can be understood systematically using the notion of $G$-dagger category from [\arxiv{2403.01651}] for different subgroups $G\leq O(2)$.
To do so, we first introduce $G$-Hermitian $2$-vector spaces using fixed points of a certain $O(2)$-action on $2\Vect$.
We then propose criteria for when such pairings are `positive', generalizing the passage from Hermitian vector spaces to Hilbert spaces.
Finally, we outline an inductive approach to defining higher Hilbert spaces in arbitrary dimension, suggesting an extension of these ideas beyond the 2-categorical setting.
\end{abstract}

\maketitle


\section{Introduction}
\label{sec:Intoduction}

Categorifications of the notion of Hilbert space are important in quantum information theory \cite{MR3971584,MR2767048} and unitary quantum field theory \cite{MR165864,MR1355899,MR3747830,MR3773743}, especially topological phases of matter \cite{1905.09566,MR4203166,MR4444089,2410.05120}.
As is often the case with categorification, there is not a unique generalization to
higher categories, but many different notions.  
In this note, we discuss a systematic framework organizing and comparing definitions of $2$-categorical unitarity in the finite-dimensional case. 
For us, a {\it 2-vector space} will mean a finite semisimple {(additive idempotent complete)} $\C$-linear category with finitely many isomorphism classes of simple objects. 
We denote the 2-category of 2-vector spaces, linear functors, and natural transformations by $2\Vect$. 
For a 2-vector space $\Va\in 2\Vect$ the following notions have previously been considered:
\begin{itemize}
    \item We can equip $\Va$ with a non-degenerate sequi-linear `inner product' functor $\langle -,- \rangle \colon \overline{\Va} \boxtimes \Va \to \Vect$, directly categorifying the inner product on a Hilbert space. 
    We denote by $2\Vect_{ip}$ the 2-category of 2-vector spaces equipped with an inner-product,linear functors\footnote{ As for linear maps between ordinary Hilbert space, we can additionally consider those invertible functors which preserve the inner product. We will come back to this in the body of this paper.}, and natural transformations. 
    
    \item We can equip $\Va$ with a complex antilinear $\dagger$-structure, i.e. a functor $\dagger \colon \overline{\Va}\to \Va^{\op}$ which is the identity on objects and satisfies $\dagger^2=\id_{\Va}$. 
    A dagger structure on $\Va$ makes all endomorphism algebras into $*$-algebras. 
    The pair $(\Va,\dagger)$ is a \emph{$\rmC^*$-2-vector space} if all of its endomorphism algebras are $\rmC^*$-algebras.\footnote{\label{Footnote:BeingC*Property} Being a $\rmC^*/\rmW^*$-algebra is a property of a complex $*$-algebra and not extra structure; see \cite[\S2.1]{MR3687214}.} 
    We denote by $\rmC^*2\Vect$ the 2-category of $\rmC^*$-2-vector spaces, linear dagger functors, and natural transformations.  

    \item Combining the previous two points we can ask a $\rmC^*$-2-vector space to be equipped with a non-degenerate $\Hilb$-valued inner product, i.e.\ a non-degenerate dagger functor $\langle -,- \rangle \colon \overline{\Va} \boxtimes \Va \to \Hilb$. This structure is naturally present on $\rmW^*$-categories~\cite{MR808930,MR2325696,2411.01678} and for this reason we denote the 2-category of those by $\rmW^*2\Vect$. 

    \item Alternatively, we could also equip $\Va$ with a dagger structure and its hom-spaces with a compatible Hilbert space structure. This structure was introduced by Baez in~\cite{MR1448713} and is often referred to as a \emph{Baez 2-Hilbert space}. We denote the 2-category of Baez 2-Hilbert spaces by $2\Hilb$.     

    \item We can specify compatible non-degenerate symmetric pairings $\kappa_{a,b}\colon \Va(a,b)\otimes \Va(b,a)\to \C$. This structure is known as a Calabi-Yau structure on $\Va$~\cite{MR2298823} and we denote by $\mathsf{CY}2\Vect$ the 2-category of 2-vector spaces equipped with a Calabi-Yau structure. 
    
\end{itemize}

We suggest that the theory of higher dagger categories helps to organize and compare these different notions. For every subgroup $G$ of $O(2)$ there is a notion of $G$-dagger 2-categories~\cite{2403.01651,2505.04761,2504.17764} that are built by picking a collection of \emph{dagger} $G$-Hermitian structures among all $G$-Hermitian structures as we will explain in more detail momentarily. The diagram 
\begin{equation}
\label{salesmanship}
    \begin{tikzcd}
        & O(2) & \\ 
        SO(2) \ar[ru,hookrightarrow] & & \Z_2^b\times \Z_2^t \ar[lu,hookrightarrow] \\ 
        & \Z_2^b \ar[ru,hookrightarrow] & \Z_2^t \ar[u,hookrightarrow] \\ 
        & \ar[luu,hookrightarrow, bend left]\ar[u,hookrightarrow] \ar[ru,hookrightarrow, bend right=10]* & 
    \end{tikzcd} 
    \ \leftrightsquigarrow \ 
    \begin{tikzcd}
        & 2 \Hilb \ar[ld] \ar[rd] & \\ 
        \operatorname{CY}2\Vect \ar[rdd, bend right]  & & \rmW^*2\Vect \ar[d] \ar[ld] \\ 
        & 2\Vect_{ip} \ar[d]  &\rmC^*2\Vect \ar[ld, bend left=10]   \\ 
        & 2\Vect & 
    \end{tikzcd}
\end{equation}
explains the connection between subgroups of $O(2)$ and notions of 2-Hilbert spaces;
this is the main content of this note. 
Here, $\Z_2^b$ and $\Z_2^t$ are the two reflections of the coordinate axis which will play different roles; $\Z_2^b$ will reverse the direction of 1-morphisms (`$b=$ bottom') whereas $\Z_2^t$ reverses the direction of 2-morphisms (`$t=$ top').  
 Following an arrow corresponds to forgetting some of the structure, e.g. $2$-Hilbert spaces have a canonical Calabi-Yau structure, but not every Calabi-Yau category lifts to a $2$-Hilbert space.

To explain our results in more detail, it is helpful to first recall our approach to finite-dimensional Hilbert spaces. 
The category of finite-dimensional vector spaces and linear maps is equipped with an anti-involution
\begin{align}
    \overline{(-)^\vee} \colon \Vect \to \overline{\Vect}^{\op} 
\end{align}
sending a vector space $V$ to its complex conjugated dual $\overline{V^\vee}$. 
This is a twisted action of $O(1)=\Z_2$ on $\Vect$ twisted by the $\Z_2$-action $\Ca \longmapsto \overline{\Ca}^{\op}$, i.e.\, a (homotopy) fixed point for the action $\Ca \mapsto \overline{\Ca}^{\op}$ on the $(2,1)$-category $\Cat_\C$ of $\C$-linear categories, $\C$-linear functors and natural isomorphisms.
There is no notion of fixed points for a twisted action. 
However, suppose we restrict the action to the core $\iota_0 \Vect$, and in addition forget about the $\C$-linear structure.
We can then use the canonical equivalence $\iota_0 \Vect \cong \iota_0 \Vect^{\op} = \iota_0 \overline{\Vect}^{\op}$ which sends every morphism to its inverse to obtain an honest $\Z_2$-action on $\iota_0 \Vect$ in the $(2,1)$-category of groupoids.
A fixed point for this action is a vector space $V$ equipped with an isomorphism $h\colon V\to \overline{V^\vee} $ that satisfies a condition equivalent to $\langle v,w \rangle := h(w)(v)$ defining a Hermitian non-degenerate sesquilinear form on $V$, see Lemma \ref{lem:hermvect}. 

The notion of a finite-dimensional Hilbert space can be defined as such a Hermitian structure which is `positive'; it satisfies $\langle v , v \rangle \geq 0 $ for all $v\in V$. 
We can conclude that the category of Hilbert spaces can be constructed from the category of vector spaces by selecting those $\Z_2$-fixed points corresponding to positive definite inner products. 
This type of approach generalizes to higher linear algebra, as we now sketch and explain in detail in the rest of the paper. 

This story categorifies to $2\Vect$, replacing $O(1)=\Z_2$ by $O(2)$. 
There is a twisted action of $O(2)$ on the 2-category $2\Vect$ of 2-vector spaces, which we call an \emph{$O(2)$-volution}.
The twisted action is defined by combining the twisted $O(2)$-action given by (higher) duals with the $O(1)$-action given by complex conjugation. 
In particular, this leads to twisted actions of subgroups $G\leq O(2)$ on $2\Vect$.
Similar to the case of vector spaces, this induces an honest (that is, non-twisted) action of $O(2)$ on $\iota_0 2\Vect $.  
Since now we are in a $2$-category, we can consider $O(2)$-fixed point structure on objects, as well as 1- and 2-isomorphisms, see Section \S\ref{sec:nHerm} for details. 

The subgroup $\Z_2^t \leq O(2)$ also acts on the category $\iota_1 2 \Vect \supseteq \iota_0 2\Vect$ which only forgets all non-invertible 2-morphisms.
We can thus still talk about $\Z_2^t$-fixed noninvertible $1$-morphisms in $(\iota_1 2 \Vect)^{\Z_2^t}$, even though we can only talk about $O(2)$-fixed point structures for isomorphisms.
Corresponding notions of 2-categories of 2-Hilbert spaces can be constructed by selecting appropriate classes of fixed points on objects and 1-morphisms which we call the class of \emph{$G$-dagger objects}. 
While we introduce our choices of classes of $G$-dagger objects in an ad hoc fashion in order to agree with established classes of operator categories and algebras, we propose an inductive criterion for selecting dagger objects for higher vector spaces in Section~\ref{sec:Inductive}.
 
Note that in the $1$-categorical situation there was a distinction between $O(1)$-fixed and $O(1)$-dagger objects (Hermitian vector spaces versus Hilbert spaces), but no distinction between $O(1)$-fixed morphisms and $O(1)$-dagger morphisms.
  Namely, the latter are \emph{unitary isomorphisms}, meaning $f^\dagger =f^{-1}$, see Lemma \ref{lem:hermvect}.
 We emphasize that for $2$-vector spaces $G$-fixed $1$-morphisms and $G$-dagger $1$-morphisms are allowed to be distinct.
 Being a $G$-fixed $2$-isomorphism is again a condition, which is the same as for $G$-dagger $2$-isomorphisms.
 For $G = O(2)$, the $G$-fixed $2$-isomorphisms are the unitary natural transformations.

\begin{table}[!ht]
    \centering
    \begin{small}
    \begin{tabular}{c||C{35mm} |C{35mm} |C{35mm} |C{35mm}  }
        $G\subset O(2)$ & $G$-fixed object & $G$-dagger object  & $G$-fixed 1-morphism & $G$-dagger 1-morphism  \\ 
        \hline \hline
        * & $2$-vector space & - & - & - \\ 
        \hline
        $\Z_2^b$ & $\Vect$-valued inner product & - & inner product preserving equivalence & - \\
        \hline
        $\Z_2^t$ & anti-involutive category & $\rmC^*$-category & anti-involutive functor & $\dagger$-functor  \\
        \hline
        $\Z_2^b\times \Z_2^t$ & anti-involutive  \& inner product & $\rmW^*$-$2$-vector space  &  anti-involutive \& inner product preserving &  inner product preserving $\dagger$-functor  \\
        \hline
        $SO(2)$ & Calabi-Yau category & - & trace-preserving equivalence & -  \\ 
        \hline
        $O(2)$ & anti-involutive CY-Cat & Baez 2-Hilbert space & anti-involutive \& trace-preserving &  isometric equivalence   \\
    \end{tabular}
\end{small}
    \caption{$G$-fixed objects and $1$-morphisms in $2$-vector spaces corresponding to subgroups of $O(2)$, and which of those are $G$-dagger. If $G$ is not contained in $\Z_2^t$, then $G$-fixed $1$-morphisms are invertible.}
    \label{tab:Cat}
\end{table}

Table~\ref{tab:Cat} summarizes these structures and our choices for dagger objects and $1$-morphisms, which will be explained in more detail below.
We do not know entirely satisfactory definitions of $G$-dagger objects in cases where $G$ does not contain the subgroup $\Z_2^t$. 
The main conceptual reason for this, is that only the 2-morphisms form a vector space over the complex numbers, and in $\C$ we can talk about positive real numbers.\footnote{\label{footnote:HalfBaked}
For $G=\Z_2^b \times \Z_2^t$, there is an under-developed notion of a `positive' 1-morphism \cite{MR4581741}, namely that the 1-morphism should look like ${}_a X\otimes_b X_a^{\dagger_b}$, together with some notion of a `positive collection' of 1-morphisms.
Here $\dagger_b$ is the `bottom dagger', see Table \ref{tab:G dagger}.
In the unitary/$\rmW^*$ setting, this is analogous to looking at a 1-morphism of the form $L^2M$ for some von Neumann algebra $M$ by \cite{MR703809}.
Such 1-morphisms have a canonical $\bbZ_2^b$-fixed point structure.
}


Finally, we explain how the above discussion is related to dagger $2$-categories.
The category $\Hilb$ of finite-dimensional Hilbert spaces and linear maps is equivalent to the category $\Vect$ of finite-dimensional vector spaces as a linear category. What allows us to distinguish these two categories is the existence of a canonical dagger structure on $\Hilb$, defined using the Hermitian adjoint of operators. 

Similarly, we can assemble 
$G$-dagger 2-vector spaces into a $2$-category with additional structure; it is a $G$-dagger 2-category~\cite{2403.01651,2505.04761,2504.17764}. 
As we summarize in Table~\ref{tab:G dagger}, the notion of a $G$-dagger $2$-category unifies several structures on $2$-categories which have been considered before.
Concretely, the $G$-dagger $2$-category constructed from $G$-dagger objects has objects the $G$-dagger objects, $1$-morphisms the $H$-dagger $1$-morphisms where $H \subseteq \Z_2^t$ is the restriction of $G$ to $\Z_2^t$, and $2$-morphisms are arbitrary $2$-morphisms.
For example, the $O(2)$-dagger $2$-category of $O(2)$-dagger $2$-vector spaces has objects Baez $2$-Hilbert spaces, $1$-morphisms dagger functors and $2$-morphisms natural transformations.
We can reinterpret the new layers of $G$-fixed points and $G$-dagger objects as respecting the $G$-dagger category structure.
As this is similar to how $O(1)$-fixed $1$-morphisms correspond to unitary morphisms in a dagger $1$-category, this allows us to reinterpret certain $G$-dagger objects, $1$- and $2$-morphisms in a $G$-dagger $2$-category as \emph{higher unitary morphisms} as we spell out in Table~\ref{tab:G dagger}.
In Section \S\ref{sec:2Hilb}, we will write out this data explicitly for the case of $2$-vector spaces.

\begin{table}[!ht]
    \centering
    \begin{tabular}{c||C{55mm}|C{40mm}|C{40mm}}
        $G\subset O(2)$ &    $G$-dagger structure & unitary 1-isomorphisms & unitary 2-isomorphisms  \\ 
        \hline \hline
    
        $\Z_2^b$  &  $\dagger_b \colon \mathfrak{X} \to \mathfrak{X}^{1\op} $ & $F^{\dagger_b} \cong F^{-1}$ & - \\
        \hline
        $\Z_2^t$  &   $\dagger_t \colon \mathfrak{X} \to \mathfrak{X}^{2\op} $& - & $\eta^{\dagger_t}=\eta^{-1}$ \\
        \hline
        $\Z_2^b\times \Z_2^t$  &   bi-involutive 2-category  & $F^{\dagger_b} \cong F^{-1}$ & $\eta^{\dagger_t}=\eta^{-1}$  \\
        \hline
        $SO(2)$ &  pivotal 2-category & pivotal equivalence &  - \\ 
        \hline
        $O(2)$   &$\dagger_t \colon \mathfrak{X} \to \mathfrak{X}^{2\op} $ + unitary adjoint functor  & isometric equivalence & $\eta^{\dagger_t}=\eta^{-1}$ \\
    \end{tabular}
    \caption{$G$-dagger structures on a $2$-category $\mathfrak{X}$. There is a relationship between $1$- and $2$-morphisms preserving this dagger structure, and preserving $G$-fixed point data.
    More specifically, the $G$-dagger morphisms in Table \ref{tab:Cat} are the unitary morphisms for the case where $\mathfrak{X}$ is the $G$-dagger $2$-category of `$G$-Hilbert spaces'.
    Above, the notion of isometric equivalence is defined in \cite[Def.~4.10]{2410.05120}.}
    \label{tab:G dagger}
\end{table} 

We also translate the above results to 2-categories of finite dimensional algebras in \S\ref{sec:Alg}. 
In doing so, we reproduce fundamental formulas and constructions from the theory of operator algebras just using categorical principles.
For example, in \S\ref{Sec: Alg Z2t}, we obtain the formula for the operator-valued inner product on the relative tensor product of Hilbert $\rmC^*$-correspondences \cite{MR0367670,MR703809,MR1303779} by composing $\bbZ_2^t$-fixed point data.
In \S\ref{Sec: Alg Z2TaB}, we see the notion of the Haagerup standard form of a von Neumann algebra \cite{MR0407615} appear from a $\bbZ_2^b\times \bbZ_2^t$-fixed point structure, selecting $\rmW^*$-algebras together with their canonical standard form $L^2(A)$ as our choice of $\bbZ_2^b\times \bbZ_2^t$-dagger objects. 
We also recover the Connes fusion for the relative tensor product of bimodules over these $\bbZ_2^b\times \bbZ_2^t$-dagger objects.
Even though our results are in the finite dimensional setting, we expect them to generalize to infinite dimensional $\rmC^*$ and $\rmW^*$-algebras.

\subsection{Outlook}

Many of the structures we obtain in Section \S\ref{sec:2dag} generalize from $2$-vector spaces to $\C$-linear categories with infinite-dimensional hom-spaces and/or infinitely many objects in an ad-hoc fashion.
Given the lack of duals and adjoints in such $2$-categories, our definition of $O(2)$-Hermitian vector space no longer makes sense in this situation.
Therefore such generalizations require some care in replacing duals and fixed points by weaker versions, similarly to how for infinite-dimensional vector spaces $V$ there is still a dual vector space $V^\vee$ and a Hilbert space is a certain map $V \hookrightarrow \overline{V}^\vee$.
See \cite{luders2026relative} for some results in this direction.
Describing infinite-dimensional $2$-Hilbert spaces more systematically would be desirable, especially for the application of infinite-dimensional higher Hilbert spaces to non-topological quantum field theory.

Separately, there is a diagram for a general 3-Hilbert space $\fX$ corresponding to Diagram \ref{salesmanship}.
\[
    \begin{tikzcd}
        & (\fX,\vee,\dag,\Psi) \ar[ld] \ar[rd] & \\ 
        (\fX,\Psi) \ar[rdd, bend right]  & & (\fX,\vee,\dag) \ar[d] \ar[ld] \\ 
        & (\fX,\vee) \ar[d]  &(\fX,\dag) \ar[ld, bend left=10]   \\ 
        & \fX & 
    \end{tikzcd}
\]
One could perform a similar analysis to \eqref{salesmanship} internal to any such 3-Hilbert space, e.g., $2\mathsf{sHilb}$.

Finally, it would be interesting to find a correspondence along the lines of \eqref{salesmanship} one categorical level higher for various objects between 3-Hilbert spaces and 3-vector spaces; see Conjecture \ref{conj:O3FixedPoint}.

\subsection*{Acknowledgments}
The authors would like to thank 
Andr\'e Henriques, Theo Johnson-Freyd,
Corey Jones,
and David Reutter
for helpful conversations.
Giovanni Ferrer and David Penneys were supported by NSF DMS-2154389, and David Penneys was also supported by NSF DMS-2554723.
Lukas M\"uller and David Penneys would also like to thank CMSA for hospitality during the twinned workshop on Quantum Field Theory and Topological Phases via Homotopy Theory and Operator Algebras.
Luuk Stehouwer was supported by the ERC Consolidator Grant ``SYMSPEC".

\section{Higher Hermitian vector spaces using \texorpdfstring{$O(n)$}{O(n)}-fixed points} 
In this section we discuss higher analogues of Hermitian vector spaces for categories of higher vector spaces. We start with some generalities, but mostly focus on the case $n=2$. 
\subsection{\texorpdfstring{$n$}{n}-Herm}
\label{sec:nHerm}

Let $n\Vect$ be the symmetric monoidal $n$-category of fully dualizable $n$-vector spaces.
Here $n$-vector spaces are defined inductively by condensation completion as in \cite{1905.09566}.
The category $1\Vect$ is the ordinary category of finite-dimensional vector spaces. 

The 2-category $2\Vect$ can be identified with the 2-category of finite semisimple $\C$-linear categories, 
linear functors, and natural transformations. An equivalent description is in terms of the Morita $2$-category $\Alg$ of semisimple finite-dimensional algebras, finite-dimensional bimodules, and bimodule maps. Indeed, a special case of the Eilenberg-Watts theorem implies that the 2-functor $\operatorname{Mod}\colon \Alg \rightarrow 2\Vect$ sending a finite semi-simple algebra to its category $\operatorname{Mod}_A$ of finite-dimensional modules is an equivalence of $2$-categories. 
From now on, we will describe $2$-vector spaces as $\C$-linear categories and refer the reader interested in comparing with the $2$-category of algebras to Section \S\ref{sec:Alg}.

The 3-category $3\Vect$ can be identified with semisimple $\C$-linear 2-categories with finitely many isomorphism classes of simple objects or equivalently the Morita 3-category of multifusion categories~\cite{1812.11933,MR4372801}. 

The $n$-category $n\Vect$ is $n$-rigid\footnote{I.e.\ all its objects are dualizable and less than $n$-morphisms have both a left and right adjoint.}, and hence
by the cobordism hypothesis~\cite{MR2555928}, there is an $O(n)$-action on $\core(n\Vect)$, the maximal subgroupoid of $n\Vect$, generalizing the $O(1)$-action on $\core(\Vect)$ that sends a vector space to its dual. 
In addition, the $n$-category $n\Vect$ has a symmetric monoidal ($\C$-antilinear) $O(1)$-action given by complex conjugation which, as any symmetric monoidal action, commutes with the $O(n)$ action from the cobordism hypothesis. 
Explicitly, the universal property of Cauchy-completion allows us to inductively lift the $O(1)$-action of conjugation $\overline{\,\cdot\,}$ on $\bbC$ as follows:
\[
\begin{tikzcd}
n\Vect \ar[r,dashed,"\overline{\,\cdot\,}"] & n\Vect \\
\rmB (n-1)\Vect \ar[u,hookrightarrow] \ar[r,dashed,"\overline{\,\cdot\,}"] & \rmB(n-1)\Vect \ar[u,hookrightarrow] \\ 
\vdots \ar[u,hookrightarrow] & \vdots \ar[u,hookrightarrow]\\
\rmB^{n-1} \Vect \ar[r,dashed,"\overline{\,\cdot\,}"] \ar[u,hookrightarrow] & \rmB^{n-1} \Vect \ar[u,hookrightarrow]\\
\rmB^n \bbC \ar[r,"\overline{\,\cdot\,}"] \ar[u,hookrightarrow] & \rmB^n \bbC \ar[u,hookrightarrow] 
\end{tikzcd}
\]
From the construction it is clear that the higher cells needed for the $O(1)$-action on $n\Vect$ lift to identities and that the action is indeed symmetric monoidal. 
By combining complex conjugation and the cobordism hypothesis action we get a new $O(n)$-action $\alpha_n$ by composing with the diagonal
$$
O(n) \xrightarrow{(\id, \det)} O(n) \times O(1).
$$
Note that $\alpha_1$ is the $O(1)$-action $\ol{(-)}^\vee$ on $\core(\Vect)$.
In~\cite{2403.01651}, we argue that this $O(n)$-action is the restriction of what we suggest to call an $O(n)$-\emph{volution}---a certain twisted action of $O(n)$---on the full $n$-category $n\Vect$. In~\cite{2dunitarity} it will be shown in detail that this expectation holds for $n=2$. We briefly summarize the findings in the following remark.

\begin{rem} 
The topological group $O(2)=SO(2)\rtimes \Z_2^{t}$ is homotopy equivalent to the 2-group $\rmB\Z \rtimes \Z_2^{t}$. In~\cite{2dunitarity}
(see also~\cite[Ex.~5.5]{2403.01651}), we will show that this $2$-group acts on the $(3,1)$-category $\mathsf{AdjCat}_2$ of 2-categories with adjoints. 
The $\Z_2^{t}$-action is given by sending a 2-category $\mathfrak{X}$ to the 2-category $\mathfrak{X}^{2 \op}$ and the generating 1-morphism of $\rmB\Z$ acts by the natural isomorphism $(-)^{LL}\colon \id_{\mathsf{AdjCat}_2} \to \id_{\mathsf{AdjCat}_2 }$ whose component at a 2-category $\mathfrak{X}$ is the functor $(-)^{LL}_{\mathfrak{X}}\colon \mathfrak{X}\longrightarrow \mathfrak{X}$ that is the identity on objects and sends 1-morphisms to their double left adjoint.    

An \emph{$O(2)$-volutive 2-category} is a fixed point for this action. 
Every fully dualizable symmetric monoidal $2$-category has a canonical $O(2)$-volution which extends the cobordism hypothesis action on the core to the whole 2-category. 
In~\cite{2dunitarity} (see also~\cite[Stat.~6.1]{2403.01651}), 
we prove that symmetric monoidal $O(2)$-volutions on a fully dualizable symmetric monoidal $2$-category correspond to symmetric monoidal $O(2)$-actions on that $2$-category.
The equivalence is given by twisting the canonical $O(2)$-volution by the given symmetric monoidal $O(2)$-action.

The resulting $O(2)$-volution on $\mathfrak{X} = 2\Vect$ when we twist the canonical $O(2)$-volution by complex conjugation can be summarized as follows (see~\cite{2dunitarity} for all the coherences)
\begin{itemize}
    \item The $\Z_2^t$-part acts by $2\Vect \longrightarrow 2\Vect^{2\op}, \ \ \Va \longmapsto \overline{\Va}^{\op}$
    \item The $\Z_2^b$ acts by $2\Vect \longrightarrow 2\Vect^{1\op}, \ \Va \mapsto \Hom(\Va, \Vect)$.
    \item The generating 1-morphism of $\rmB\Z$ acts by a natural isomorphism $S \colon \id_{2\Vect}\Longrightarrow (-)^{LL}_{2\Vect}$ whose component at a 2-vector space $\Va$ is most conveniently described as a profunctor
    \begin{align}
   S_{\Va} \colon  \Va^{\op} \boxtimes \Va & \longrightarrow \Vect \\
   a\boxtimes b & \longmapsto \Va(b,a)^*.
    \end{align}
    Here given $x, y \in \Va$, the enriched hom space is denoted $\Va(x,y) \in \Vect$.
\end{itemize}
\end{rem}
{
\begin{rem}
The reader might wonder why we only twisted the cobordism hypothesis $O(n)$-action by an $O(1)$-action as opposed to a full $O(n)$-action.
Our bias to do so is the same bias that makes differential topologists work with oriented manifolds.
When replacing vector spaces with supervector spaces and hence oriented manifolds with spin manifolds, the $O(n)$-action we would twist by would no longer factor through $O(1)$.
At the other extreme, when working with real vector spaces and hence unoriented manifolds, we do not need to twist at all.
\end{rem}}

To motivate our definition of a \emph{Hermitian} $n$-vector space, we recall from the introduction the observation that for $n=1$,
a fixed point for the action $\alpha_1 = \ol{(-)}^\vee$ is exactly a nondegenerate Hermitian form on a vector space.
We summarize this in the following lemma.

\begin{lem}
\label{lem:hermvect}
    The groupoid
    \[
    \core(\Vect)^{O(1)}
    \]
    of fixed points for the $O(1)$-action $\alpha_1 = \ol{(-)}^\vee$ on $\core(\Vect)$ has objects finite-dimensional vector spaces $V$ equipped with a nondegenerate (but not necessarily positive definite) Hermitian form $h_V \colon V \to \ol{V}^\vee$ and morphisms $(V,h_V) \to (W,h_W)$ are unitary maps, i.e. isomorphisms $f \colon V \to W$ such that
    \[
    f^\dagger \colon W \xrightarrow{h_W} \ol{W}^\vee \xrightarrow{\ol{f}^\vee} \ol{V}^\vee \xrightarrow{h_V^{-1}} V
    \]
    is the inverse of $f$.
\end{lem}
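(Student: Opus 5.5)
We unwind the definition of a homotopy fixed point for a $\Z_2$-action on a $1$-groupoid. A $\Z_2$-action on the groupoid $\core(\Vect)$ consists of
\begin{itemize}
\item the functor $\alpha = \ol{(-)}^\vee$, acting on morphisms by $f \mapsto \ol{(f^{-1})}^\vee$ (precomposing with inversion makes it covariant on the core);
\item a natural isomorphism $\epsilon \colon \id \Rightarrow \alpha^2$, namely the canonical evaluation isomorphism $V \to \ol{\ol{V}^\vee}^\vee \cong V^{\vee\vee}$;
\item a coherence condition, which here holds on the nose.
\end{itemize}
A fixed point is a pair $(V, h_V)$ with $h_V \colon V \to \alpha(V)$ an isomorphism such that the composite
\[
V \xrightarrow{h_V} \alpha(V) \xrightarrow{\alpha(h_V)} \alpha^2(V)
\]
agrees with $\epsilon_V$ (with the appropriate inverse built in). A morphism of fixed points is an isomorphism $f$ with $\alpha(f) \circ h_V = h_W \circ f$.

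\textbf{Objects.} First record the dictionary between isomorphisms $h \colon V \to \ol{V}^\vee$ and nondegenerate sesquilinear forms, via $\langle v, w \rangle := h(w)(v)$. Nondegeneracy of the form is equivalent to $h$ being an isomorphism, since $V$ is finite-dimensional. Then compute $\alpha(h)$. Because $\alpha$ acts on isomorphisms by conjugate-dual of the inverse, $\alpha(h) = \ol{h^{-1}}^\vee$, so the fixed point equation $\alpha(h) \circ h = \epsilon_V$ is equivalent to $\ol{h}^\vee \circ \epsilon_V = h$. Here $\ol{h}^\vee$ is identified as a map $\ol{\ol{V}^\vee}^\vee \cong V^{\vee\vee} \to \ol{V}^\vee$. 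Evaluating both sides on $w$ and then on $v$ should give
\[
h(w)(v) = \ol{h(v)(w)},
\]
that is, $\langle v, w \rangle = \ol{\langle w, v \rangle}$, which is exactly Hermitian symmetry.

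\textbf{Main obstacle.} The one delicate step is this bookkeeping. One must fix the precise identification $\ol{\ol{V}^\vee}^\vee \cong V^{\vee\vee}$: a conjugate-linear functional on a conjugate-linear dual, where $\ol{\phi}(\ol{x}) = \ol{\phi(x)}$. One must also track where the inverse enters, so that the fixed point condition becomes Hermitian symmetry rather than, say, skew-Hermitian symmetry or the trivial condition. I would do this with explicit formulas for $\epsilon$ and for $\ol{f}^\vee$, and a one-line evaluation check.

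\textbf{Morphisms.} Take an isomorphism $f \colon V \to W$. The condition $\ol{(f^{-1})}^\vee \circ h_V = h_W \circ f$ rearranges to
\[
h_V = \ol{f}^\vee \circ h_W \circ f, \qquad \text{that is,} \qquad h_V^{-1} \circ \ol{f}^\vee \circ h_W \circ f = \id_V.
\]
This says precisely that $f^\dagger \circ f = \id_V$. Since $f$ is invertible, this is equivalent to $f^\dagger = f^{-1}$. In terms of forms, it is the statement $\langle f v, f v' \rangle_W = \langle v, v' \rangle_V$.

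Finally, composition and identities are inherited from $\core(\Vect)$, which completes the identification of the groupoid.
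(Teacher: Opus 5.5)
Your proposal is correct. It is the same direct unwinding the paper relies on: the paper gives no separate proof beyond the remark in the introduction, which uses the same inverse-twisted action on $\core(\Vect)$ and the same convention $\langle v,w\rangle = h(w)(v)$, and your morphism computation $h_V = \overline{f}^\vee \circ h_W \circ f$ is exactly the unitarity condition stated.

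The bookkeeping you flag does close up. Take the canonical evaluation $\epsilon_V(w)(\overline{\phi}) = \overline{\phi(\overline{w})}$, the choice fixed in Lemma~\ref{lem:VectO2herm}. Then $h = \overline{h}^\vee\circ\epsilon_V$, evaluated at $w$ and then at $\overline{v}$, reads $h(w)(\overline{v}) = \overline{h(v)(\overline{w})}$, which is Hermitian symmetry; using $-\epsilon$ instead would have produced skew-Hermitian forms.
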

This motivates the following definition for $n>1$:
\begin{defn}
\label{def:nHermVect}
A \emph{Hermitian $n$-vector space} is an $O(n)$-fixed point for $\alpha_n$. 
More generally, for a subgroup $G\leq O(n)$, a \emph{$G$-Hermitian $n$-vector space} is a fixed point for the induced $G$-action on $\iota_0(n\Vect)$. 
\end{defn}

\begin{rem}
Note that $(n-1)\Vect \in n\Vect$ carries canonically the structure of an $O(n)$-Hermitian $n$-vector space because the $O(n)$-action on $\core(n\Vect)$ is symmetric monoidal and so preserves the unit.
\end{rem}

Definition \ref{def:nHermVect} provides a whole $n$-groupoid of $G$-Hermitian $n$-vector spaces.
In particular, given a $1$-isomorphism of $n$-vector spaces, the structure of an equivalence of $G$-Hermitian $n$-vector spaces amounts to a lift along the functor $\core(n\Vect)^{G} \to \core(n\Vect)$.
For $n=1$ this is only a condition and recovers the notion of unitary equivalence between Hermitian vector spaces.

More generally, we can {isolate} certain \emph{non-invertible} $1$-morphisms between Hermitian $n$-vector spaces as follows.
Consider the $(n,1)$-category $\iota_1 n\Vect$ forgetting all non-invertible $k$-morphisms for $k >1$. 
A consequence of the cobordism hypothesis is that $O(n-1)$ acts on $\iota_1 n\Vect $. 
We can thus consider the category of fixed points $(\iota_1 n\Vect)^{O(n-1)}$ combining the action from the cobordism hypothesis with complex conjugation.  
 More generally, for a subgroup $G \leq O(n-1)$, we can talk about $G$-fixed point structures on noninvertible $1$-morphisms. 

 \begin{rem}
     Even though our definition of $G$-Hermitian $n$-vector space compiles for any $G \leq O(n)$, we choose in this work to focus on a restricted class of subgroups of $O(2)$.
     More specifically, 
     \begin{itemize}
         \item whenever $G$ contains a rotation by less than $180$ degrees we require $G$ contain all of $SO(2)$, and 
         \item whenever $G$ contains a reflection different from the two standard coordinate reflections we require $G$ to be all of $O(2)$.\footnote{Observe that we could still allow $G$ to be the diagonal $\Z_2$-subgroup consisting of a single rotation by $180$ degrees, but we chose to exclude this case for expository reasons. }
     \end{itemize}  
     The intuitive reason is that the standard string diagram calculus of $2$-categories takes place in a square as opposed to a sphere.
     
     In more detail, given a $2$-category $\mathfrak{X}$, the result of `reflecting string diagrams along the bottom coordinate' is $\mathfrak{X}^{1\op}$, while its `reflection along the top coordinate' is $\mathfrak{X}^{2\op}$.
     Since we do not know how to compose morphisms other than vertically and horizontally, we are not aware how to reflect a $2$-category over any other axis.
     If $1$-morphisms of $\mathfrak{X}$ have adjoints, a further allowed operation is to rotate string diagrams by 180 degrees, resulting in a functor $(-)^L\colon \mathfrak{X}\to \mathfrak{X}^{1\op,2\op}$ (after choosing the direction of rotation so that it in the given conventions it recovers the left adjoint).
     However, we are not able to give an interpretation to rotations by a smaller angle.\footnote{In a double category, rotations with 90 degrees do make sense using conjoints and companions.
     In another direction, morphisms in `disk-like' higher categories \cite{MR2978449} do not have sources and targets, and so other rotations could be analyzed in this context.
     }

     To make the description of the `allowed' subgroups of $O(2)$ more systematic, note that as a $2$-group, $O(2)$ can be represented non-minimally as the semidirect product $(\Z_2^b \times \Z_2^t) \ltimes B\Z$. 
     Here both reflection $\Z_2$'s act nontrivially on the $B\Z$ factor of which the generator is given by the path $\operatorname{rot}_t$ for $t \in [0,\pi]$ connecting the identity with the rotation by $180$ degrees.
     The subgroups we are interested in correspond to simplicial subgroups in this simplicial model of $O(2)$.
     We expect understanding of an analogous `cube-like' simplicial decomposition of the $\infty$-group $O(n)$ to be important for deeper understanding of $n$-Hilbert spaces.
     However, at the moment we are not aware how to make this systematic, and hope to return to this point in future work.
 \end{rem}

\subsection{\texorpdfstring{$2$}{2}-Herm}
\label{sec:2Herm}

In this subsection, we spell out the definition of $G$-Hermitian $2$-vector space explicitly for several choices of $G$.

    \subsubsection{$\Z_2^b$} The action of $\Z_2^b$ of the cobordism hypothesis on the core of a fully dualizable $n$-category is given by the dual functor.
    For $2\Vect$, one model of the dual of a 2-vector space $\Va$ is $\mathcal{V}^{\vee}\coloneqq \Hom(\mathcal{V},\Vect)$.
    Combining this $\Z_2^b$-action with complex conjugation gives the $\Z_2^b$-action $\mathcal{V} \mapsto \ol{\mathcal{V}}^\vee$.
Hence, a $\Z_2^b$-Hermitian $2$-vector space consists of a linear category $\mathcal{V}$ and an equivalence $D\colon \mathcal{V} \to \overline{\mathcal{V}}^{\vee}$ together with a natural isomorphism $\theta\colon \overline{D}^{\vee, -1} \circ D \Longrightarrow \id_{\Va}$ satisfying a natural coherence condition, where we implicitly use the identification $\overline{(-)}^{\vee} \circ \overline{(-)}^{\vee} \cong \id_{2\Vect}$. 
Such $\Z_2^b$-fixed point data are equivalent to non-degenerate $\Vect$-valued inner products 
\begin{align}
  \langle -,-\rangle \colon  \overline{\Va} \boxtimes \Va \xrightarrow{D \boxtimes \id_{\Va}} \Va^{\vee} \boxtimes \Va \xrightarrow{\operatorname{ev}} \Vect
\end{align} which are symmetric in the sense that $\theta$ corresponds to coherent isomorphisms $\langle a, b \rangle \cong \overline{\langle b , a \rangle }$ such that $\theta^2 = \id$, compare \cite[\S 3]{MR1256993} and \cite{2411.01678}. 

There is no notion of $\Z_2^b$-fixed point on a general 1-morphism because the $\Z_2^b$-action reverses sources and targets. 
However, for invertible 1-morphisms $F \colon \mathcal{V} \to \mathcal{W}$, we can require the extra datum of $F$ commuting with $D$, or equivalently $F$ preserving the categorified inner product.
Explicitly this compatibility datum consists of natural isomorphisms
\begin{equation}
\label{eq:bottomunitary}
\langle F(x), y \rangle_\cW \cong \langle x, F^{-1}(y) \rangle_\cV
\end{equation}
between functors $\overline{\mathcal{V}} \boxtimes \mathcal{W} \to \Vect$
satisfying a condition with respect to the isomorphisms 
$\langle x_1, x_2 \rangle_\cV \cong \ol{\langle x_2,x_1 \rangle_\cV}$
and
$\langle y_1, y_2 \rangle_\cW \cong \ol{\langle y_2,y_1 \rangle_\cW}$.
In this sense, the fixed point data on $F$ can be seen as a categorification of the notion of a unitary morphism, analogously to Lemma \ref{lem:hermvect}.
A natural isomorphism between inner product preserving linear functors is a $\Z_2^b$-fixed point exactly when it intertwines the respective isomorphisms \eqref{eq:bottomunitary}.

\begin{ex}
\label{ex:vect}
    A short computation shows that there is only one $\Vect$-valued inner product on $\Vect$ up to inner product preserving equivalences.
    This $\Z_2^b$-fixed point is given by $\langle V,W \rangle = \overline{V} \otimes W$ equipped with the obvious flipping isomorphism $\langle V,W \rangle \cong \overline{\langle W,V \rangle}$.
\end{ex}

\subsubsection{$\Z_2^t$} 
The action of the subgroup $\Z_2^t\leq O(2)$ sends a 2-vector space $\Va$ to $\overline{\Va}^{\op}$. 
The corresponding fixed points are 2-vector spaces $\Va$ equipped with an anti-involution $(d: \mathcal{V} \to  \overline{\mathcal{V}}^{\op}, \eta: \id_\mathcal{V} \Rightarrow d^2)$. 
A $\Z_2^t$-fixed $1$-morphism between $\Z_2^t$-Hermitian $2$-vector spaces is an anti-involutive functor and a $\Z_2^t$-fixed $2$-morphism is an anti-involutive natural transformations, see \cite[Section 2]{MR4845668} for detailed definitions.
Observe that in contrast to $\Z_2^b$, a $\Z_2^t$-fixed point structure on a functor $F$ does not require it to be invertible.  


Note that $\Z_2^t$-Hermitian $2$-vector spaces can be identified with $\Z_2^b$-Hermitian $2$-vector spaces; the two subgroups $\Z_2^t$ and $\Z_2^b$ are homotopic inside $O(2)$ and hence define equivalent actions on $\iota_0 2\Vect$.
On objects this can be seen by noting that the hom-pairing $\Va^{\op}\boxtimes \Va\to \Vect$ realizes $\Va^{\op}$ as the dual of $\Va$, giving an equivalence $\Va^{\op} \cong \Va^\vee$. 
Explicitly the resulting correspondence between anti-involutions and $\Vect$-valued inner products is given by
\begin{align}
\label{eq:innprodishom}
    \langle x , y \rangle \cong \Va(dx,y)
\end{align}
equipped with the natural isomorphism
\begin{equation}
\label{eq:cathermitian}
\langle x , y \rangle = \Va(dx,y) \xrightarrow{d} \ol{\Va(dy, d^2 x)} \xrightarrow{\eta} \ol{\Va(dy, x)} = \ol{\langle y , x \rangle}.
\end{equation}
We cannot compare the two $\Z_2$-actions on all of $2 \Vect$ however, since the $2$-functor $\Va \mapsto \Va^{\op}$ naturally reverses the direction of $2$-morphisms, while $\Va \mapsto \Va^{\vee}$ reverses $1$-morphisms.

\subsubsection{$\Z_2^b \times \Z_2^t$} 
A $\Z_2^b\times \Z_2^t$-Hermitian $2$-vector space consists of $\Z_2^b$- and $\Z_2^t$-fixed point data on a $2$-vector space $\mathcal{V}$ with a compatibility 2-isomorphism  
\begin{equation}
    \begin{tikzcd}
        \Va \ar[r,"d"] \ar[d,"D",swap] & \overline{\Va}^{\op} \ar[d,"\overline{D}^{\op}"] \ar[ld, Rightarrow] \\ 
        \overline{\Va}^{\vee} \ar[r,"\overline{d}^{\vee, -1}",swap] & \Va^{\vee, \op} 
     \end{tikzcd}
\end{equation} 
satisfying several conditions.
For this diagram to compile we implicitly used the identifications 
$$
(\Va^\vee)^{\op} 
= 
\Hom(\mathcal{V}^{\op}, \Vect^{\op}) 
\cong
\Hom(\mathcal{V}^{\op}, \Vect) 
=
(\mathcal{V}^{\op})^\vee,
$$ 
and 
$$
\ol{\Va^\vee}
= 
\Hom(\ol{\mathcal{V}}, \ol{\Vect}) 
\cong
\Hom(\ol{\mathcal{V}}, \Vect) 
=
(\ol{\mathcal{V}})^\vee,
$$ 
where we used the equivalences $\overline{(-)} \colon \Vect \to \overline{\Vect}$ and  $(-)^\vee \colon \Vect^{\op}\to \Vect$ respectively.
Translating to the level of inner products results in $\ol{\langle d(-), - \rangle}^\vee \cong \langle -, d^{-1}(-) \rangle$.  
We thus get natural isomorphisms 
\begin{equation}
\label{eq:aiinnerproduct}
\langle dx, dy \rangle \cong \overline{\langle x,y \rangle}^\vee
\end{equation}
saying that the inner product defines an anti-involutive functor $\ol{\Va} \boxtimes \Va \to \Vect$.

For 1-isomorphisms between $\Z_2^b \times \Z_2^t$-fixed $2$-vector spaces, a $\Z_2^b\times \Z_2^t$-fixed point is an anti-involutive structure equipped with compatible isomorphisms $\langle F(x), F(y) \rangle \cong \langle x, y \rangle$. 
A natural isomorphism preserves $\Z_2^b \times \Z_2^t$-fixed point data simply when it preserves both the $\Z_2^b$- and $\Z_2^t$-fixed point data.

\subsubsection{$SO(2)$} 
An $SO(2)$-Hermitian 2-vector space consists of a $2$-vector space $\Va$ equipped with a trivialization of the profunctor 
\begin{align}
   S_\Va \colon \Va^{\op}\boxtimes \Va & \longrightarrow \Vect \\ 
    a\boxtimes b &\longmapsto \Hom(b,a)^\vee.
\end{align}
This data is equivalent to a Calabi-Yau structure on $\Va$, i.e., a collection of non-degenerate traces $\tau_c: \End(c) \to \bbC$ for all $c \in \Va$. 

An $SO(2)$-Hermitian 1-isomorphism is an equivalence which preserves the trace. This is equivalent to the notion of a pivotal equivalence from~\cite[Defn.~2.2 and Lem.~5.8]{2307.06485}. 

\subsubsection{$O(2)$} 
Putting everything together, an $O(2)$-Hermitian $2$-vector space consists of a $2$-vector space $\mathcal{V}$, an anti-involution $(d: \mathcal{V} \to  \mathcal{V}^{\op}, \eta: \id_\mathcal{V} \Rightarrow d^2)$ and a Calabi-Yau structure $\tau_c: \End(c) \to \bbC$ for $c \in \mathcal{V}$ which are compatible in the sense that $\kappa(f,g)=\overline{\kappa(d(f),d(g))}$ where $\kappa$ is the bilinear pairing on hom spaces induced by $\tau$. 

The underlying $\Z_2^b$-Hermitian $2$-vector space is given by composing $d$ with the equivalence $\mathcal{V}^{\op} \cong \mathcal{V}^\vee$ given by the Yoneda embedding as explained above. 
The underlying $\Z_2^b \times \Z_2^t$-Hermitian $2$-vector space involves the isomorphism \eqref{eq:aiinnerproduct} which in this case is the isomorphism
\begin{equation}
\label{eq:Z2xZ2}
\ol{\langle dx, y \rangle}^\vee \overset{\eqref{eq:cathermitian}}{\cong} 
\langle y, dx \rangle^\vee = 
\mathcal{V}(y,dx)^\vee \xrightarrow{\kappa} \mathcal{V}(dx,y) = \langle dx, y \rangle.
\end{equation}

\section{Organizing notions of 2-Hilbert spaces into \texorpdfstring{$G$}{G}-dagger 2-categories}
\label{sec:2dag}
We briefly recall the 1-categorical situation. 
Structures on a $1$-category involving unitarity are conveniently encoded by a dagger structure, i.e.\ a functor $\dagger \colon \Ca \to \Ca^{\op}$ such that $\dagger^2 = \id_{\Ca}$ which is the identity on objects. 
For the generalization to higher categories it is convenient to reformulate this in terms of an anti-involutive structure together with the choice of $O(1)$-fixed point data on objects. 
Concretely, the main result of~\cite{MR4845668} is that dagger categories are equivalent to anti-involutive categories $\Ca$ together with a full subgroupoid $\Ca_0 \subset (\iota_0 \Ca)^{O(1)}$, such that $\Ca_0\to \Ca$ is essentially surjective. 
We have seen that in the example $\Ca = \Vect$ equipped with the anti-involution $V \longmapsto \overline{V}^\vee$, the groupoid $(\iota_0 \Ca)^{O(1)}$ consists of nondegenerate Hermitian forms.
The dagger category of Hilbert spaces corresponds to the full subgroupoid on the Hilbert spaces. 

This perspective on dagger categories underlies the generalization to higher categories proposed in~\cite{2403.01651}; specifying a $G$-dagger structure on a $G$-volutive 2-category $\mathfrak{X}$ is the same as selecting a collection of $G$-fixed point data for objects and 1-morphisms. Interestingly in the higher categorical world the Hermitian structures are not a \emph{full} subgroupoid of $(\iota_0 \mathfrak{X})^G$ allowing for a refinement of one $G$-Hermitian object into many inequivalent chosen structures. We will discuss common choices for these in the next section, which will illuminate this point.  

{New version of above paragraph: This perspective on dagger categories underlies the generalization to higher categories proposed in~\cite{2403.01651}; specifying a $G$-dagger structure on a $G$-volutive 2-category $\mathfrak{X}$ involves selecting a collection of $G$-fixed point data for objects and 1-morphisms. 
However, in the higher categorical world the $G$-dagger structure will no longer be fixed by a full subgroupoid of $(\iota_0 \mathfrak{X})^G$.
Instead, we will also need to choose certain fixed point data on non-invertible $1$-morphisms using the $O(1)$-action on $\iota_1 \mathfrak{X}$.
We will discuss common choices for these fixed points in the next section, which will illuminate this point.}
\subsection{Dagger objects in \texorpdfstring{$2\Vect$}{2Vect}}
\label{sec:2Hilb}

The upshot of the previous discussion is that to construct a dagger $2$-category, and hence talk about higher categorical unitarity, we choose fixed points. 
For every subgroup $G\leq O(2)$, we could thus attempt to make a $G$-dagger $2$-category of `$G$-$2$-Hilbert spaces' by selecting those $G$-fixed $2$-vector spaces (and selecting fixed $1$-morphisms between those) which `look positive'.
We will explain how for certain choices of $G$-dagger objects
we do recover well-studied notions in the literature as special cases of $G$-fixed points, such as $\rmC^*$-categories for $\Z_2^t$ and $2$-Hilbert spaces for $O(2)$. 
At this moment, we are not aware of a systematic way to do this for general $G$, but in Section \S\ref{sec:Inductive} we do explain a conceptual way how to define $G$-dagger objects for $G$ containing $\Z_2^b \times \Z_2^t$.

\subsubsection{$\Z_2^b$:}
For $G=\Z_2^b$ we do not know a good notion of preferred Hermitian structures (see Footnote \ref{footnote:HalfBaked} however). 
Hence, for now we just choose all of them and denote the resulting category by $2\Vect_{ip}$ for 2-vector spaces equipped with a vector space valued inner product. 

Motivated by Lemma \ref{lem:hermvect}, we define the identity on objects 2-functor 
\begin{align}
\label{eq:bottomdag}
    \dagger_b \colon 2\Vect_{ip} & \longrightarrow  2\Vect_{ip}^{1\op} \nonumber
    \\ 
    (\Va,D,\theta) &\longmapsto  (\Va,D,\theta) \\ \nonumber
    F\colon (\Va,D,\theta) \to (\Va' ,D',\theta') &  \longmapsto F^{\dagger_b} \coloneqq D \circ \overline{F}^\vee \circ  D'^{-1} \ \ .
\end{align}
Analogously to \eqref{eq:bottomunitary}, we have that that $F^{\dagger_b}$ is an `adjoint' with respect to the $\Vect$-valued inner product induced by the fixed point data, i.e.\, $\langle c' , F(c) \rangle_{\Va'} \cong \langle F^{\dagger_b}(c'),c \rangle_{\Va}$.
This gives a `non-unitary' version of \cite[Definition 4.14]{2411.01678} in the finite-dimensional case.
Together with the natural isomorphism $(\dagger_b)^2 \cong \id$ satisfying a condition, we call this structure a $\Z_2^b$-dagger $2$-category.

\subsubsection{$\Z_2^t$:}
The situation is different for $G=\Z_2^t$, where we have seen that a fixed point is the same as an anti-involutive 2-vector space $(\mathcal{V},d)$. 
Among anti-involutive categories, dagger categories play an important role when discussing positivity, and hence the first subclass of $\Z_2^t$-Hermitian 2-vector spaces we restrict to are those corresponding to dagger structures. 
Similarly to the situation discussed above, inequivalent dagger categories $(\mathcal{V},\dagger)$ with the same underlying anti-involutive category $(\mathcal{V},d)$ are parametrized by possible choices of full subgroupoids of $O(1)$-fixed points on its objects. In particular, there are many inequivalent dagger structures on the same anti-involutive category as alluded to in the previous section. Correspondingly, we restrict to those $\Z_2^t$-fixed 1-morphisms which are not just anti-involutive functors, but dagger functors. 
This yields a $\Z_2^t$-dagger 2-category $2\Vect^\dagger$ of 2-vector spaces equipped with a $\C$-antilinear dagger structure, $\C$-linear dagger functors between them, and natural transformations between those.

Similar to the fact that the morphisms in $\Hilb$ are arbitrary linear maps, we took $2$-morphisms of $2\Vect^\dagger$ to be arbitrary natural transformations.
The unitary natural transformations are encoded by the $\Z_2^t$-dagger structure, which manifests itself in terms of the existence of a natural transformation $\varphi^{\dagger_t} \colon G \Rightarrow F$ for every natural transformation $\varphi \colon F \Rightarrow G$ {defined by $(\varphi^{\dagger_t})_x := (\varphi_x)^\dagger$}.\footnote{{We warn the reader not to confuse the \emph{internal dagger} $\dagger$ on the objects of $2\Vect^\dagger$ with the (related) dagger structure $\dagger_t$ on the whole $2$-category.}}  
The operation $(-)^{\dagger_t}$ is a strictly involutive 2-functor 
\begin{align}
(-)^{\dagger_t} \colon  2\Vect^\dagger \longrightarrow (2\Vect^\dagger)^{2\op}
\end{align}
which is the identity on objects and 1-morphisms.
This leads to the notion of unitary 2-morphisms as natural transformations satisfying $\varphi^{-1}=\varphi^{\dagger_t}$, and positive natural transformations as those of the form $\psi^{\dagger_t} \psi$.
What we call a $\Z_2^t$-dagger $2$-category is usually just called `dagger $2$-category' in the literature  (see e.g.\ \cite[Definition 2.5]{MR3584697}, \cite{MR4326107} and \cite{MR4419534}), while our notion of $G$-dagger $2$-category encompasses more concepts.

Even though we could talk about `positive $2$-morphisms' between $2$-vector spaces equipped with a dagger structure, they aren't `positive' in the sense of `positive real numbers'. 
In comparison, elements of the form $a^*a$ in a $*$-algebra $A$ only necessarily have positive spectrum when $A$ is a $\rmC^*$-algebra.
To relate this observation to $2$-vector spaces, note that a complex antilinear dagger structure on a $2$-vector space $\mathcal{V}$ makes all endomorphism algebras into $*$-algebras. 
Then $\mathcal{V}$ is called a \emph{$\rmC^*$-2-vector space} (i.e.\ a $\rmC^*$-category that is additionally a $2$-vector space) if all endomorphism algebras are $\rmC^*$ (see Footnote \ref{Footnote:BeingC*Property}).
Imposing this additional condition leads to the $\Z_2^t$-dagger 2-category $\rmC^*2\Vect$, defined as the full subcategory of $2\Vect^\dagger$ on the $\rmC^*$-2-vector spaces.
In other words: a \emph{$\Z_2^t$-dagger object} in $2\Vect$ is a $\rmC^*$-category, a \emph{$\Z_2^t$-dagger $1$-morphism} a dagger functor, and a $\Z_2^t$-dagger $2$-morphism a unitary natural isomorphism.

\subsubsection{$\Z_2^b\times \Z_2^t$:}

As in the previous paragraph, it is natural to focus on $\Z_2^b\times \Z_2^t$-fixed point structures on a 2-vector space $\Va$ such that the $\Z_2^t$-top part is given by a dagger structure such that all endomorphism algebras are $\rmC^*$-algebras. 
Since the inner product $\overline{\Va}\boxtimes \Va \to \Vect$ is anti-involutive and $\Va$ is equipped with a dagger structure, it makes sense to additionally ask the inner product to be a dagger functor $\overline{\Va}\boxtimes \Va \to \Hilb$. 
{We will therefore define \emph{$\Z_2^b \times \Z_2^t$-dagger $2$-vector spaces} as those $\Z_2^b\times \Z_2^t$-fixed points such that the $\Z_2^t$-fixed point is a $\rmC^*$-category structure with the property that the $\Vect$-inner product is a $\Hilb$-valued dagger functor.}

\begin{ex}
\label{ex:HilbZ/2xZ/2}
Let $\Va = \Hilb$ with its canonical dagger structure viewed as a $\Z_2^t$-dagger object in $2\Vect$.
Since $\Va$ is equivalent to $\Vect$ as a linear category, it follows by Example \ref{ex:vect} that there is a unique $\Vect$-valued inner product on $\mathcal{V}$ given by the functor
\[
\langle -, - \rangle = \mathcal{V}((-)^\dagger, (-)){,}
\]
{where $\dagger$ is the internal dagger.}
There are two possible compatibility data
$\langle (-)^\dagger, (-)^\dagger \rangle \cong \overline{\langle (-), (-)\rangle}^\vee$ making $\mathcal{V}$ into a $\Z_2^b \times \Z_2^t$-fixed point, and each is uniquely determined by its values on $(\C,\C)$.
Only one of the two induces a dagger functor $\overline{\Va} \times \Va \to \Hilb$ such that the exchange isomorphism is unitary.
On a pair $(V,W)$ this gives the Hilbert space of Hilbert--Schmidt maps $V \to W$ with its usual inner product $\langle x,y \rangle = \tr (x^\dagger \circ y)$.
\end{ex}

In~\cite{MR2325696,2411.01678} it is shown that general $\rmW^*$-categories have canonical $\Hilb$-valued inner products and hence in particular $\rmW^*$-categories whose underlying linear category is a 2-vector space are canonically $\Z_2^b\times\Z_2^t$-Hermitian $2$-vector spaces. 
This motivates us to call $\Z_2^b \times \Z_2^t$-dagger $2$-vector spaces \emph{$\rmW^*$-$2$-vector spaces}.
We denote the $\Z_2^b\times\Z_2^t$-dagger 2-category of $\rmW^*$-2-vector spaces and dagger functors by $\rmW^*2\Vect$.\footnote{At this point, the reader may rightfully object that there is no difference in finite-dimensions between a $\rmC^*$ and $\rmW^*$-algebra, and thus there is no difference between finite semisimple $\rmC^*$ and $\rmW^*$-categories.
However, when we look at the 2-categories of (finite-dimensional) $\rmC^*$ and $\rmW^*$-algebras, there is no obvious $\bbZ_2^b$-dagger on $\rmC^*$-Hilbert bimodules, while there is an obvious conjugate Hilbert space bimodule.
In the infinite-dimensional setting, this distinction not only persists, but becomes an actual difference between these types of $*$-algebras.
Indeed, there is no notion of a standard form for an infinite-dimensional $\rmC^*$-algebra, so there is no canonical way to endow a $\rmC^*$-category with a $\Hilb$-valued inner product.
In contrast, the Haagerup standard form exists for any $\rmW^*$-algebra \cite{MR0407615}, which is used to construct the $\Hilb$-valued inner product on a $\rmW^*$-category \cite{MR2325696,2411.01678}.
} 

The $\Z_2^b\times\Z_2^t$-dagger structure manifests itself in the existence of both $\dagger_t$ and $\dagger_b$ operations that commute with each other such that the natural isomorphism $(\dagger_b)^2 \cong \id$ is unitary. This structure is also known as a bi-involutive 2-category~\cite{MR3663592,2411.01678}.

\subsubsection{$SO(2)$:}
Similar as for $\Z_2^b$, we are not aware of a good notion of positive $SO(2)$-Hermitian $2$-vector space. Let us still explain the additional structure we find if we take all of them. Let us denote this category by $\mathsf{CY}2\Vect$. As explained in detail in~\cite[Section 5.1.2]{2307.06485} and~\cite{2504.17764} the additional structure present on 
$\mathsf{CY}2\Vect$ is a pivotal structure, i.e.\ compatible identifications between left and right adjoints of every linear functor, or equivalently, for every 1-morphism $F$ a choice of 1-morphism $F^\vee$, which is equipped with both the structure of a left adjoint and a right adjoint to $F$. In a pivotal $2$-category there is the notion of a pivotal equivalence~\cite[Definition 2.2]{2307.06485}. 
A pivotal equivalence is a 1-morphism $F$ such that $F^\vee$ is an inverse of $F$ whose witnessing 2-isomorphisms to
and from the identity are given in terms of the left and right adjunction data. 

\subsubsection{$O(2)$:}
Recall that an $O(2)$-Hermitian 2-vector space consists of an anti-involutive category $\Va$ and a compatible trace $\tau$. As before, the $O(2)$-dagger objects should at least require the anti-involution to be an actual dagger structure $\dagger$. This allows us to define a Hermitian inner product on all hom spaces 
\begin{align}
\Hom_{\Va}(a,b) \otimes \Hom_{\Va}(a,b) &\longrightarrow \C \\ 
f\otimes g &\longmapsto \tau(f^\dagger \circ g) \ \ .  
\end{align}
It is natural to further restrict to those Hermitian structures for which this inner product is positive definite, recovering the notion of a Baez 2-Hilbert space~\cite{MR1448713}. 
The underlying $\Z_2^b$-Hermitian $2$-vector space of a Baez 2-Hilbert space is the functor
\begin{align}
    \overline{\Va}\boxtimes \Va \xrightarrow{\dagger \boxtimes \id_{\Va}} \Va^{\op}\boxtimes \Va \xrightarrow{\Hom (-,-)} \Vect
\end{align} 
which lifts to a $\Hilb$ valued inner product.
We denote the resulting $O(2)$-dagger 2-category with $1$-morphisms linear dagger functors by $2\Hilb$. 

This category has all the structures discussed above: two functors $\dagger_b\colon 2\Hilb \rightarrow 2\Hilb^{1\op}$ and $\dagger_t \colon 2\Hilb \rightarrow 2\Hilb^{2\op}$ where both functors are the identity on objects and $\dagger_t$ is additionally the identity on 1-morphisms. Furthermore, for every dagger functor, there is a canonical choice of a unitary adjoint functor, going the other direction which is both a left and right adjoint. 

In every $O(2)$-dagger category there is an additional notion of unitary $1$-morphism; those dagger 1-morphisms which are invertible and preserve the full $O(2)$-fixed point data. For 2-Hilbert spaces these are exactly those dagger equivalences which induce unitary isomorphisms on hom Hilbert spaces.

\begin{rem}
In a $\rmW^*$-$2$-vector space $\mathcal{V}$, both the structure of a $\Hilb$-valued inner product and a $\rmC^*$-category are present.
However, unless $\mathcal{V}$ is a $2$-Hilbert space, these two structures need not satisfy
\begin{equation}
\label{eq:2Hilbcondition}
\langle x, y \rangle_{\mathcal{V}} 
= 
\Hom_\cV(x^\dagger,y),
\end{equation} 
and so $\rmW^*$-$2$-vector spaces are a priori more data than the structure of a {$\rmW^*$-category on $\mathcal{V}$}.\footnote{Recall that every $2$-vector space is equivalent to a direct sum of finitely many copies of $\Vect$. It therefore follows by Example \ref{ex:HilbZ/2xZ/2} that every $2$-vector space has a unique up to equivalence $\rmW^*$-$2$-vector space structure, which happens to satisfy \eqref{eq:2Hilbcondition}. However, we have no reason to believe this persists in infinite dimensions.}
Given that more subtle functional-analytic formulas analogous to \eqref{eq:2Hilbcondition} are used in the infinite-dimensional setting (see \cite[Definition 4.1]{2411.01678}), we expect it to be relevant for the study of higher $\rmW^*$-categories.
More specifically, there should be a suitable generalization of $O(2)$-dagger $2$-categories in settings with less dualizability, enriching the existing notion of bi-involutive $2$-categories.
\end{rem}

\section{Formulation in terms of algebras}\label{sec:Alg}
Using the equivalence between $\Alg$ and $2\Vect$, the results of the previous section lead to corresponding notions of unitarity for algebras.
The first part of this section explains what $G$-fixed points in $\Alg$ look like and posits which of those are $G$-dagger, see Table \ref{tab:alg} for the results.
The second part explains the relationship with $G$-fixed points and $G$-dagger structures in $2$-vector spaces.

\subsection{Unitarity for algebras}
Recall that we defined $\Alg$ as the $2$-category of finite-dimensional semisimple complex algebras in which the category of $1$-morphisms $A \to B$ are given by $A$-$B$-bimodules and composition functors are given by the relative tensor product.



\subsubsection{$\Z_2^b$}

One convenient dual functor is given by $(-)^{\op} \colon \Alg \to \Alg^{1\op}$.
It follows that a $\Z_2^b$-fixed point is a ($\C$-antilinear) \emph{stellar structure} in the sense of \cite{1112.1000}, i.e.\ a Morita equivalence $A \cong \ol{A}^{\op}$ equipped with higher $\Z_2^b$-equivariance data. We choose to rewrite the stellar structure as a left $\overline{A}\otimes A$-module $H$ equipped with data making it `nondegenerate and Hermitian'.  
Explicitly the `Hermitian' data is given by an anti-linear involution $J \colon H \to H$ such that $J((\ol{a_1} \otimes a_2) \xi) = (\ol{a_2} \otimes a_1) J(\xi)$ for all $a_1, a_2 \in A$ and $\xi \in H$.
Observe that the left $\overline{A}$-action on $H$ is completely fixed as $(\overline{a}\otimes 1) \xi = J( 1\otimes a )J \xi$.
Because this $\overline{A}$-action needs to combine with the $A$-action to form an $\ol{A} \otimes A$-action, this data satisfies the condition $J a_1 J a_2 \xi = a_2 J a_1 J \xi$ for all $a_1, a_2 \in A$ and $\xi \in H$.
In other words, a stellar structure on $H$ is equivalent to an anti-linear involution $J$ such that $JAJ \subseteq A'$ where $A'$ is the commutant of $A$ in $B(H)$.\footnote{{It then follows from the assumption that $H$ is a Morita equivalence as an $A$-$\ol{A}^{\op}$-bimodule that $JAJ = A'$.}}
We think of $J$ as a \emph{modular conjugation} on $H$.

The $\Z_2^b$-fixed points on invertible $1$-morphisms can be described using compatibility data with $H$.
In case $(H,J)$ comes from a $C^*$-algebra structure on $A$, the $\dagger_b$ of a bimodule reproduces what is known as the conjugate bimodule in the operator algebra literature \cite{MR703809,MR1424954}.
As in $2\Vect$, we will not define $\Z_2^b$-dagger objects in $\Alg$.

\subsubsection{$\Z_2^t$}
\label{Sec: Alg Z2t}
In our conventions, $\Z_2^t$-fixed points agree with $\Z_2^b$-fixed points on objects. 
In particular, a $*$-algebra structure induces a $\Z_2^t$-fixed point structure. We define \emph{$\Z_2^t$-dagger objects in $\Alg$} to be $\rmC^*$-algebras.
A $\Z_2^t$-fixed point structure on a (not necessarily invertible) $1$-morphism $X \colon A \to B$ is given by the data of $X$ intertwining the respective stellar structures on $A$ and $B$.
In the case where $A$ and $B$ are $*$-algebras, this data corresponds to a generalization of a $B$-valued inner product on $X$.
 Namely, a short computation shows the fixed point structure corresponds to a sesquilinear map $\langle - | - \rangle_B \colon \overline{X} \times X \to B$ such that
\begin{equation}
\label{eq:C*hilbim}
\langle x_1|ax_2 \rangle_B = \langle a^* x_1|x_2 \rangle_B  
\qquad 
\langle x_1| x_2 b \rangle_B = \langle x_1| x_2 \rangle_B b
\qquad 
\langle x_1| x_2\rangle_B^* = \langle x_2| x_1\rangle_B.
\end{equation}
{We thus recovered the notion of (not necessarily positive definite) Hilbert $\rmC^*$-bimodule from our formalism.}
{Following operator-algebraic intuition, w}e define a $\Z_2^t$-fixed bimodule between $\rmC^*$-algebras to be \emph{$\Z_2^t$-dagger} if for all $x \in X$, $\langle x|x\rangle_B$ is positive, i.e.\ of the form $b^* b$ for some $b \in B$.
Viewing the $C^*$-algebra also as a $\Z_2^b$-fixed point, observe that the conjugate $B$-$A$-bimodule $X^{\dagger_b}$ of a $\Z_2^t$-dagger bimodule $X$ has a canonical $B$-valued inner product on the other side given by\footnote{The abstract underlying reason for the exchange of sides is that $(-)^{\vee L} \cong (-)^{R \vee}$ in any fully dualizable symmetric monoidal $2$-category.} 
\begin{equation}
\label{eq:wrongside}
    _B \langle x_1, x_2 \rangle = \langle x_2,x_1 \rangle_B = \langle x_1, x_2 \rangle_B^*.
\end{equation}

If $Y$ is a $B$-$C$-bimodule with $C$-valued inner product, if follows by computing the fixed point data on the composition that the inner product on $X \otimes_B Y$ is given by
 \begin{equation}
 \label{eq:CompositionInnerProductDeterined}
 \langle x_1 \otimes y_1| x_2 \otimes y_2 \rangle_C 
 = 
 \langle y_1 | \langle x_1 | x_2 \rangle_B  y_2 \rangle_C.
 \end{equation}
We reiterate that 
in our theory {formulas \eqref{eq:C*hilbim} and \eqref{eq:CompositionInnerProductDeterined} are not a bespoke definition as in \cite{MR703809,MR0367670,MR1303779}}, but rather a consequence of computing $\bbZ_2^t$-fixed point data.
A $\Z_2^t$-fixed $2$-morphism---which is the same as a $\Z_2^t$-dagger $2$-morphism at this categorical level---{can be} shown to be a bimodule map which is unitary in the respective inner products.

\subsubsection{$\Z_2^b\times\Z_2^t$}
\label{Sec: Alg Z2TaB}
A $\Z_2^b \times \Z_2^t$-fixed algebra has two stellar structures with compatibility data. 
As above we ask the $\Z_2^t$-part to correspond to a $\rmC^*$-algebra $A$. Recall that the $\Z_2^b$-part yields a left $\overline{A}\otimes A$-module $H$ equipped with an involution $J \colon H \to \ol{H}$.
Using the $*$-structure of $A$, we may view $H$ as an $A$-$A$-bimodule.
Its right $A$-action is explicitly given by the familiar formula $\xi a = J a^* J \xi$. 

The requirement that the $\Z_2^t$ and $\Z_2^b$ assemble into a $\Z_2^b \times \Z_2^t$-fixed point implies that $H$ comes with $\Z_2^t$-fixed point data as a morphism $ \overline{A}\otimes A \to \C$, i.e.\ comes equipped with a $\C$-valued inner product for which the $ \overline{A}\otimes A$-action is a $*$-action. 
The remaining condition of being a $\Z_2^b \times \Z_2^t$-fixed point is that $J$ is unitary.
It is now natural to define a \emph{$\Z_2^b \times \Z_2^t$-dagger algebra} to be a $\Z_2^b \times \Z_2^t$-fixed point such that the $\Z_2^t$-fixed point data on $H$ is $\Z_2^t$-dagger in the sense defined above, i.e.\ $H$ is a Hilbert space. 

A {n easy argument reducing to the case $A=\C$} shows that the data $(J,H)$ of a $\Z_2^b \times \Z_2^t$-dagger object on a given finite-dimensional $\rmW^*$-algebra $A$ is unique up to isomorphism. 
One especially canonical choice is given by the Haagerup standard form $L^2A$ \cite{MR0407615}, see \cite[IX.1]{MR1943006} for a textbook reference on standard forms.\footnote{Not every choice of $(H,J)$ admits a self dual positive cone $P$ making $(H,J,P)$ into a standard form on the nose. However, in our approach there always exists an isomorphic $(H',J')$ which does have a self dual positive cone satisfying the necessary properties. 
For example, if $A = \C$, $H = \C$ and $J = -\id_\C$, then $(H,J)$ does not admit a self dual positive cone{; the candidate $P = i\R_{\geq 0}$ is not self-dual}. However, $(H,J)$ is isomorphic to $(H,-J)$ under the unitary operator on $H$ given by multiplication by $i$.}
In this finite-dimensional setting, 
by choosing a faithful trace on $A$, we may identify $L^2A=A$ and $J\xi=\xi^*$.\footnote{In fact, identifications of $L^2A$ with $A$ as $A$-$A$ bimodules which identify $J\xi=\xi^*$ and map $1_A$ into the positive cone of $L^2A$ exactly correspond to faithful tracial states on $A$; see \cite[\S3.3]{UQSLbook}}




\begin{rem}
\label{rem:ConnesFusion}
Observe that for a von Neumann algebra $A$, $L^2A$ is the only data needed to bootstrap the $A$-valued inner products on $\rmW^*$-Hilbert $A$-modules $M_A,N_A$ to a $\Hilb$-valued inner product; indeed, we may use the formula
$$
\langle M_A | N_A\rangle
:=
N_A\boxtimes_A L^2A\boxtimes_A M^{\dagger_b}
$$
where the right hand side has the 4-point formula \emph{Connes fusion} inner product \cite{MR1645078}
$$
\langle n_1\boxtimes \xi_1 \boxtimes \overline{m_1}|n_2\boxtimes \xi_2\boxtimes \overline{m_2}\rangle
:=
\langle \xi_1 | \langle n_1|n_2\rangle_A\cdot \xi_2\cdot {}_A\langle \overline{m_2}, \overline{m_1}\rangle \rangle_{L^2A}
$$
where ${}_A\langle\overline{m_2}, \overline{m_1}\rangle:= \langle m_1|m_2\rangle_A$ 
is the left $A$-valued inner product on $M^{\dagger_b}$ discussed in \eqref{eq:wrongside}.
We will see this formula emerge directly from $\bbZ_2^b\times \bbZ_2^t$-fixed point data in \eqref{eq:4PointFormula} below.

Conversely, the $\Hilb$-valued inner product on the $\rmW^*$-category of $\rmW^*$-Hilbert $A$-modules reconstructs $L^2A$ from $\langle A|A\rangle^{\Hilb}$ \cite{2411.01678}.
\end{rem}



\subsubsection{$SO(2)$}
The $SO(2)$-action on the core of $\Alg$ is given by sending $A$ to the $A$-$A$-bimodule $A^* := \Hom_\C(A,\C)$~\cite{oritthesis, MR3931946}.
Therefore an $SO(2)$-fixed point is an $A$-$A$-bimodule isomorphism $A \cong A^*$, which is exactly a symmetric Frobenius structure $\tau \colon A \to \C$. The corresponding $SO(2)$-dagger structure (i.e.\ pivotal structure) on the bicategory $\Alg$ is for example discussed in~\cite{MR2742426}.

\subsubsection{$O(2)$}
An $O(2)$-fixed point is a stellar Frobenius algebra $A$; a stellar algebra with a compatible symmetric Frobenius structure, see \cite{MR4881921}.
If $A$ is a $*$-algebra with symmetric Frobenius structure $\tau$ this compatibility condition is $\tau(a^*) = \ol{\tau(a)}$.
In that case $(a,b) := \tau(a^* b)$ defines a nondegenerate Hermitian inner product on $A$.
We define the algebra to be \emph{$O(2)$-dagger} when this inner product is positive, i.e.\ when $\tau$ is a faithful trace in the $*$-algebraic sense. 
This $O(2)$-fixed point data is exactly the structure of an \emph{$\rmH^*$-algebra} on $A$ \cite{MR13235,MR3971584}.

Given two $H^*$-algebras, we can also use $\tau$ to build the $\C$-valued Hermitian inner product $(x_1,x_2) := \tau (\langle x_1, x_2 \rangle)$ on any $\Z_2^t$-fixed bimodule $X$.
Moreover, $X$ will be a Hilbert space if the algebra valued inner product was positive since $\tau(a^* a) = |\tau(a)|^2 \geq 0$ is a faithful positive trace.
In this Hilbert space both $A$ and $B$ act unitarily. 
Conversely for any Hilbert space structure $(-,-)$ on $X$ for which $A$ and $B$ act unitarily, there is a unique $B$-valued inner product $\langle-,-\rangle$ such that $(x_1,x_2) = \tau (\langle x_1, x_2 \rangle)$ by nondegeneracy of $\tau$.
There is a conceptually analogous but functional-analytically more delicate construction that connects $\rmC^*$-correspondences with Hilbert space bimodules for $\rmW^*$-algebras~\cite{MR945550}.

\begin{table}
    \centering
    \begin{small}
    \begin{tabular}{c||p{33mm} |p{26mm} |p{36mm} |p{34mm} }
        $G \leq O(2)$ & $G$-fixed object & $G$-dagger {object}  & $G$-fixed {1-morphism} & $G$-dagger 1-morphism  \\ 
        \hline \hline
        $\Z_2^b$ & stellar algebra & -  & Morita equivalence \mbox{compatible} with stellar structure  & - \\
        \hline
        $\Z_2^t$ &  stellar algebra & $\rmC^*$-algebra & bimodule with \mbox{Hermitian} form & $\rmC^*$ Hilbert bimodule \\
        \hline
        $\Z_2^b\times \Z_2^t$ & 2 compatible stellar algebra structures  & $\rmW^*$-algebra  with standard form $L^2(A)$ & compatible with stellar structure \& Hermitian form & $\rmC^*$ Hilbert bimodule \\
        \hline
        $SO(2)$ & symmetric Frobenius  & - & trace-preserving Morita equivalence & - \\ 
        \hline
        $O(2)$ & stellar Frobenius & $\rmH^*$-algebras & trace-preserving Morita equivalence with \mbox{Hermitian} form & Trace-preserving Hilbert $\rmC^*$-Morita equivalence \\
    \end{tabular}
    \end{small}
    \caption{$G$-fixed points as well as $G$-dagger structures on objects and $1$-morphisms in the $2$-category of algebras, bimodules and bimodule maps.}
    \label{tab:alg}
\end{table}

\subsubsection{Relationship with $2\Vect$}

The equivalence $\Alg \to 2\Vect$ is given by sending $A$ to the finite semisimple category $\Mod(A)$ of finite-dimensional right $A$-modules, an $A$-$B$-bimodule $X$ to the $\C$-linear functor $(-) \otimes_A X \colon \Mod(A) \to \Mod(B)$ and a bimodule map to the obvious natural transformation. 

If $A$ is a stellar algebra interpreted at a $\Z_2^b$-fixed point, then $\Mod(A)$ has the $\Vect$-valued inner product
\begin{align}
\label{Eq: pairing}
     M_A, N_A \longmapsto (\overline{M}\otimes N) \otimes_{\overline{A}\otimes A} H \in \Vect.
\end{align}
Here, $H$ is the $\ol{A}\otimes A$-module corresponding to the stellar structure. 
The unitary isomorphism $\langle M| N \rangle \cong \overline{\langle N| M \rangle}$ 
corresponds to the anti-unitary involution
$\langle M| N \rangle \to \langle N| M \rangle$ 
given by 
\[
\ol{m} \otimes n \otimes_{\ol{A}\otimes A} \xi \mapsto \ol{n} \otimes m \otimes_{\ol{A}\otimes A} J \xi.
\]
To see this map is well-defined, observe that since $\overline{a}\xi=JaJ\xi$, we have
\[
\begin{tikzcd}
    \ol{m a} \otimes nb \otimes \xi 
    \ar[r, mapsto] \ar[d,equal]
    & 
    \ol{nb} \otimes ma \otimes J \xi 
    \ar[d, equal] 
    \\
    \ol{m} \otimes n\otimes (\ol{a}\otimes b) \xi 
    \ar[d, equal] 
    &\ar[d, equal]
    \ol{n}\ol{b} \otimes ma \otimes J \xi 
    \\
    \ol{m} \otimes n \otimes JaJ b \xi 
    \ar[r, mapsto] 
    &
    \ol{n} \otimes m \otimes aJbJ(J\xi) 
\end{tikzcd}
\]
In this way $\langle A,A\rangle \cong \ol{\langle A,A\rangle}$ recovers $J \colon H \to \overline{H}$ from the $\Vect$-valued inner product.
An invertible $A$-$B$-bimodule $X$ intertwines stellar structures on $A$ and $B$ exactly when the corresponding functor $(-) \otimes_A X$ is inner-product preserving in the sense of \eqref{eq:bottomunitary}.

If the stellar structure is interpreted as a $\Z_2^t$-fixed point, then $\Mod(A)$ is an anti-involutive category via $M \mapsto \Hom_A(M,A)$.
Here we use the stellar structure to make the obvious left $A$-module structure on $\Hom_A(M,A)$ into a right $A$-module.
In case the stellar structure on $A$ comes from a $*$-algebra, a fixed point for this anti-involution is the same as an $A$-valued inner product as discussed above.
In other words, a fixed point structure on an $A$-module $M$ as an object of $\Mod(A)$ is the same as a $\Z_2^t$-fixed point structure on the corresponding $\C$-$A$-bimodule seen as a $1$-morphism in $\Alg$. 
To build from $\Mod(A)$ a dagger category in case $A$ is a $\rmC^*$-algebra, we choose those fixed points for which $\langle m|m\rangle$ is of the form $a^* a$ for all $a \in A$.\footnote{Unless $A$ is a $\rmC^*$-algebra, there is no assurance that the sum of two elements of the form $a^*a$ will again be of this form.}
This makes $\Mod(A)$ into a $\rmC^*$-category.


If $A,B$ are $\Z_2^t$-fixed algebras and $X$ is a $\Z_2^t$-fixed $A$-$B$-bimodule, then $(-) \otimes_A X \colon \Mod(A) \to \Mod(B)$ becomes an anti-involutive functor because the $\Z_2^t$-fixed point structure on the $\C$-$A$-bimodule (right $A$-module) $M$ composes with $X$ to obtain a $\Z_2^t$-fixed point structure on $M \otimes_A X$.
Since $\Z_2^t$-dagger structures on bimodules compose, $(-) \otimes_A X$ is a dagger functor if the $B$-valued Hermitian inner product on $X$ is positive.

If $(A,H,J)$ is a $\Z_2^b \times \Z_2^t$-dagger algebra, then it follows from the requirement that $H$ is a Hilbert space that the corresponding $\Vect$-valued inner product \eqref{Eq: pairing} is $\Hilb$-valued.
Indeed, if $M$ and $N$ are equipped with positive $A$-valued inner products, then the Hermitian pairing on $\langle M|N\rangle$ induced by \eqref{Eq: pairing} being an anti-involutive functor is given by the composition of the $\overline{A}\otimes A$-Hilbert module structures on $\ol{M} \otimes N$ and $H$.
This defines a Hilbert space as the relative tensor product of a Hilbert module with a Hilbert space is again a Hilbert space.
Explicitly working out the formula for the inner product on the Hilbert space $\langle M|N\rangle$ using Equation \eqref{eq:CompositionInnerProductDeterined} gives
\begin{equation}
\begin{aligned}
\langle \ol{m_1} \otimes n_1 \otimes \xi_1| \ol{m_2} \otimes n_2 \otimes \xi_2 \rangle 
&= \langle \xi_1| \langle \ol{m_1} \otimes n_1| \ol{m_2} \otimes n_2 \rangle_{\overline{A} \otimes A} \xi_2 \rangle
\\
&= \langle \xi_1| (\langle \ol{m_1}| \ol{m_2} \rangle_{\overline{A}} \otimes \langle n_1| n_2 \rangle_{A}) \xi_2 \rangle
\\
&= 
\langle \xi_1| (\overline{\langle m_1| m_2 \rangle_{A}} \otimes \langle n_1| n_2 \rangle_{A}) \xi_2 \rangle
\\&=
\langle \xi_1| J\langle m_1| m_2 \rangle_{A}^*J 
\langle n_1| n_2 \rangle_{A} \cdot \xi_2 \rangle
\\&=
\langle \xi_1|
\langle n_1| n_2 \rangle_{A} \cdot \xi_2 \cdot {}_A\langle \overline{m_2} , \overline{m_1}\rangle\rangle
\end{aligned}
\label{eq:4PointFormula}
\end{equation}
where in the last equality above, we used the left $A$-valued inner product on the left $A$-module $ M^{\dagger_b}$.
When $H=L^2A$, we may identify
$$
\langle M|N\rangle = N\otimes_A L^2A \otimes_A M^{\dagger_b},
$$
and \eqref{eq:4PointFormula} above exactly reproduces the 4-point formula for \emph{Connes fusion}~\cite{MR1645078} as in Remark \ref{rem:ConnesFusion}.
In particular, $\langle A,A \rangle = L^2 A$ as a Hilbert space.

Given a Frobenius structure $\tau$, the Calabi-Yau structure on $\Mod(A)$ is given as follows.
First note that because every finite-dimensional $A$-module $M$ is finitely generated and projective, the map 
\[
M \otimes_\C \Hom_A(M, A) \to \End M_A
\]
is an isomorphism of vector spaces.
We thus get a trace on $\End M_A$ by 
\[
M \otimes_\C \Hom_A(M, A) \to M \otimes_A \Hom_A(M, A) \xrightarrow{\ev} A \xrightarrow{\tau} \C.
\]
Now $A$ is an $\rmH^*$-algebra if and only if the Calabi-Yau and dagger structures on $\Mod(A)$ we defined make it into a $2$-Hilbert space.

Note that every $\rmH^*$-algebra $A$ is a $\rmC^*$-algebra since 
$A$ sits unitally inside $B(A)$ (which is finite dimensional) as a $*$-closed subalgebra.
This corresponds to the fact that every $2$-Hilbert space is automatically a $\rmC^*$-category.

Conversely, if $\mathcal{V}$ is a $2$-vector space with certain Hermitian structure (anti-involutive/vector space valued inner product, $C^*$-category, Calabi-Yau, $2$-Hilbert), it is easy to extract the corresponding Hermitian structure (stellar, $C^*$, symmetric Frobenius, $\rmH^*$) on the algebra $\End_\mathcal{V}(x)$, where $x\in \mathcal{V}$ is a generator.
We conclude:

\begin{thm}
For $G = \Z_2^t$, $\Z_2^b \times \Z_2^t$, and $O(2)$ we have that $G$-dagger objects in $\Alg$ as explained here and $G$-dagger objects in $2\Vect$ as in Section \S\ref{sec:2Hilb} correspond to one another. 
\end{thm}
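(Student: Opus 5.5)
The plan is to transport fixed-point data along the equivalence $\Mod\colon \Alg \to 2\Vect$ and check that positivity conditions match in both directions, one $G$ at a time. Since the $O(2)$-volutions on $\Alg$ and $2\Vect$ are compatible with $\Mod$, $G$-fixed objects on both sides already correspond; what remains is to show the \emph{dagger} conditions, which are properties of fixed points, coincide. For the direction from $2\Vect$ to $\Alg$ we pick a generator $x\in\Va$ and set $A=\End_\Va(x)$, so $\Va\simeq\Mod(A)$. For the converse direction we use the constructions of the previous subsection: $\Hom_A(-,A)$, the pairing \eqref{Eq: pairing}, and the trace built from $\tau$.

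For $G=\Z_2^t$ we argue as follows. If $A$ is a $\rmC^*$-algebra, then $\Mod(A)$ with the chosen positive $A$-valued inner products is a $\rmC^*$-category, as already explained above. Conversely, if $\Va$ is a $\rmC^*$-2-vector space, then $A=\End(x)$ is a $\rmC^*$-algebra by definition. We then check that the two constructions are mutually inverse up to equivalence. Under $\Va\simeq\Mod(A)$, the dagger of a morphism $f\colon M\to N$ should correspond to the adjoint with respect to the positive $A$-valued inner product $\langle m|m'\rangle$. This amounts to identifying $\Hom_A(M,A)$ with $\overline{M}$ via $m\mapsto\langle m|-\rangle$, and positivity is preserved because $M$ is a summand of $A^{\oplus n}$.

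For $G=\Z_2^b\times\Z_2^t$ we use the computation \eqref{eq:4PointFormula}. If $(A,H,J)$ is $\Z_2^b\times\Z_2^t$-dagger, then $H$ is a Hilbert space and $J$ is unitary, so the induced pairing $\langle M|N\rangle=(\overline M\otimes N)\otimes_{\overline A\otimes A}H$ is $\Hilb$-valued and dagger. Conversely, given a $\rmW^*$-2-vector space, we set $H=\langle x,x\rangle$. This is a left $\overline A\otimes A$-module via functoriality, and it is a Hilbert space because the pairing is $\Hilb$-valued. The symmetry isomorphism $\langle x,x\rangle\cong\overline{\langle x,x\rangle}$ gives $J$, and its unitarity follows from the stated unitarity of $(\dagger_b)^2\cong\id$. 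Here $\langle A,A\rangle$ recovers $H$ as shown above.

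For $G=O(2)$ we proceed similarly. If $(A,\tau)$ is an $\rmH^*$-algebra, then $\tau(f^\dagger g)$ on $\End(M_A)$, computed through $M\otimes\Hom_A(M,A)$, reduces via $M\subset A^{\oplus n}$ to $\tau$ of sums of terms $a^*a$, and is therefore positive definite. By polarization and the cyclic trace this extends to all hom-spaces. Conversely, restricting the Calabi-Yau trace to $\End(x)=A$ gives $\tau$ with $\tau(a^*a)>0$ for $a\ne 0$, so $A$ is $\rmH^*$.

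I expect the main obstacle to be the $\Z_2^b\times\Z_2^t$ case in the direction from $2\Vect$ to $\Alg$. There one must verify that the abstract compatibility 2-cell and the dagger-functor condition on $\langle-,-\rangle$ translate exactly into the $*$-compatibility of the $\overline A\otimes A$-action on $H$ together with unitarity of $J$. The plan for this step is to reduce to simple summands. The paper's earlier uniqueness results (Example \ref{ex:HilbZ/2xZ/2} and the uniqueness of $(H,J)$ up to isomorphism) show that both sides are unique up to equivalence, so on $\Vect\simeq\Mod(\C)$ one only needs to match the unique dagger structures. Additivity then handles $\Va\simeq\Vect^{\oplus k}$.
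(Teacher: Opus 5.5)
Your overall route matches the paper's. You transport along $\Mod$ and build the structures on $\Mod(A)$ from $\Hom_A(-,A)$, the pairing \eqref{Eq: pairing} and the trace from $\tau$; conversely you restrict to $A=\End_\Va(x)$. The $\Z_2^t$ case, the $O(2)$ case and the forward $\Z_2^b\times\Z_2^t$ direction are fine, and you even supply the positivity check via $M\subset A^{\oplus n}$ that the paper leaves implicit.

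\textbf{The gap} is the reduction you propose for the converse $\Z_2^b\times\Z_2^t$ step. ``Uniqueness on $\Vect$ plus additivity over $\Vect^{\oplus k}$'' presupposes that the $\Z_2^b$-datum $D\colon\Va\to\overline{\Va}^\vee$ preserves each simple summand, and it need not. On $\Va=\Hilb\oplus\Hilb$ take
\[
\langle x,y\rangle=\overline{x_1}\otimes y_2\oplus\overline{x_2}\otimes y_1
\]
with the flip maps as symmetry data. This is a nondegenerate, symmetric, $\Hilb$-valued dagger functor with unitary exchange data, compatible with the dagger structure. So it is a $\rmW^*$-2-vector space in the sense of \S\ref{sec:2Hilb}. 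Yet $\langle s,s\rangle=0$ for both simple objects $s$. Hence it is not a sum of structures on the two copies of $\Vect$, and it is not equivalent to the diagonal structure, for which $\langle s,s\rangle\cong\C$. On the algebra side it is $A=\C\oplus\C$ acting diagonally on $H=\C^2$ with $J(\alpha,\beta)=(\overline\beta,\overline\alpha)$. Here $JAJ=A'$, the $\overline A\otimes A$-action is a $*$-action and $J$ is antiunitary, but $Jp_1J=p_2$. So $H$ is the flip bimodule, not $L^2A$, and it violates $JzJ=z^*$ on the center. The uniqueness statements you invoke therefore only hold when $\Va$ has a single simple object ($A$ a factor), and your plan does not reach the general case.

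\textbf{The fix:} the reduction is unnecessary; argue directly for all $\Va$ at once. Put $A=\End_\Va(x)$ and $H=\langle x,x\rangle$.
\begin{itemize}
\item The dagger-functor identity $\langle\overline f,g\rangle^\dagger=\langle\overline{f^\dagger},g^\dagger\rangle$ says exactly that $\overline a\otimes b\mapsto\langle\overline a,b\rangle$ is a $*$-representation of $\overline A\otimes A$ on the Hilbert space $H$.
\item The $(x,x)$-component of the symmetry isomorphism is $J\colon H\to\overline H$. Its naturality gives $J\langle\overline a,b\rangle=\langle\overline b,a\rangle J$.
\item Unitarity of $J$ is precisely the requirement that the exchange data be unitary, as in Example \ref{ex:HilbZ/2xZ/2}. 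Take it from the definition rather than deducing it from unitarity of $(\dagger_b)^2\cong\id$, which is a consequence of that requirement.
\item Nondegeneracy makes $H$ a Morita equivalence, so $JAJ=A'$.
\end{itemize}
Together with $\langle A,A\rangle\cong H$ from \eqref{Eq: pairing}, this shows the two constructions are mutually inverse. The flip example then simply corresponds to itself on both sides.
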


\section{An inductive procedure to select positive Hermitian morphisms}\label{sec:Inductive}
As explained in
Section \S\ref{sec:2Hilb}, there are established notions of positivity commonly used in the literature in the setting of 2-vector spaces. 
The situation becomes scarcer for higher $n$; for $n=3$ see the recent paper~\cite{2410.05120}. 
In this section, we give a proposal for an inductive way of selecting positive Hermitian structures that reproduces the above choices for $n=2$, and has the potential to generalize to arbitrary dimensions. 

Let us start by explaining our approach for $1$-vector spaces.  
Let $(V,h\colon V \to \ol{V}^\vee)$ be a Hermitian vector space and let $f\colon \C \to V$ be a linear map. We can consider the endomorphism $f^\dagger \circ f$ of $\C$ which can be identified with a complex number. From
\[
\overline{( f^\dagger \circ f)}= ( f^\dagger \circ f)^\dagger = f^\dagger \circ f^{\dagger \dagger } = f^\dagger \circ f 
\] 
it follows that $f^\dagger \circ f$ is a real number. 
Observe that $(V,h)$ is a Hilbert space if and only if, for every linear map $f: \bbC \to V$, the real number $f^\dagger f$ is non-negative.
Indeed, $f$ corresponds to an element $v \in V$ and $f^\dagger f(1) = \langle v,v \rangle$ corresponding to the Hermitian form $h$.
In this way, Hilbert spaces can be defined from knowing what $O(1)$-fixed points and positive real numbers are. 

Abstractly the definition of positivity we just discussed only relies on the notion of positivity for the endomorphisms of the monoidal unit,\footnote{Studying this in more general categories would be interesting, especially when $1$ is not a generator, such as in the category of super vector spaces.} and so it is suitable for categorification purposes. The rough strategy for extending this to 2-vector spaces is analogous: 
We want to define a $G$-fixed 2-vector space $\Va$ to be `$G$-Hilbert' if for enough morphisms $F\colon \Vect \to \Va $ with certain fixed point data, the induced fixed point $F^{\dagger_b} \circ F \in \End(\Vect)\cong \Vect$ is `Hilbert'. 
For us to carry out this approach, the subgroup $G$ must contain $\Z_2^b$ to allow us to define $F^{\dagger_b}$, and must contain $\Z_2^t$ to allow us to obtain fixed point data on $F^{\dagger_b} \circ F$, which we can then require to be a Hilbert space. 
Therefore of the subgroups we discussed, only the cases $\Z_2^b\times \Z_2^t$ and $O(2)$ are suitable. 

Before continuing, first recall that $\Vect \in 2\Vect$ has canonical $O(2)$-fixed point structure (and hence also a $\Z_2^b\times \Z_2^t$-fixed point structure) induced by the fact that it is the monoidal unit:

\begin{lem}
\label{lem:VectO2herm}
The canonical $O(2)$-Hermitian structure on the $2$-vector space $\Vect$ is given as follows. 
\begin{enumerate}
    \item The $\Z_2^t$-Hermitian structure is the anti-involution $d = \ol{(-)}^\vee$ with the canonical evaluation map $V \to \ol{\ol{V}^\vee}^\vee$
    \item The $\Z_2^b$-Hermitian structure is given by $\langle V,W \rangle_{\Vect} = \Vect(\overline{V}^\vee,W) = \overline{V} \otimes W$ with the canonical isomorphism $\overline{V} \otimes W \cong \overline{\overline{W} \otimes V}$
    \item The $SO(2)$-Hermitian structure is the standard trace.
    \item The compatibility isomorphism $\langle dV,dW \rangle_{\Vect} \cong \overline{\langle V,W \rangle_{\Vect}^\vee}$ is induced by the canonical isomorphism $V^\vee \otimes W^\vee \cong (V \otimes W)^\vee$
\end{enumerate}
\end{lem}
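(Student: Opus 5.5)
The plan is to exploit that $\Vect$ is the monoidal unit of $2\Vect$, and that the $O(2)$-action on $\core(2\Vect)$ is symmetric monoidal. The unit therefore carries a canonical fixed point structure: every component of the action sends $\Vect$ to the unit, via the unit coherence isomorphisms of the (twisted) action. So I would not construct the structure from scratch. Instead, for each generating piece of the $O(2)$-volution described in the earlier Remark, I would evaluate the unit coherence at $\Vect$ and identify the result with the explicit formulas (1)--(4).

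For (1), the $\Z_2^t$-part sends $\Va\mapsto \ol{\Va}^{\op}$. The unit comparison $\Vect\simeq\ol{\Vect}^{\op}$ is forced to be $V\mapsto \ol{V}^\vee$, since this is the functor identifying $\Vect^{\op}$ with $\Vect$ after conjugation. It preserves the unit object $\C$, and it is the unique-up-to-iso symmetric monoidal such equivalence. The $2$-cell $\eta\colon \id\Rightarrow d^2$ comes from the coherence $\Z_2^t\cdot\Z_2^t=1$. Evaluated on $\C$ it is the double-dual evaluation, and by naturality it is the canonical evaluation $V\to \ol{\ol{V}^\vee}^\vee$ for all $V$.

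For (2), I would compute the $\Z_2^b$-part using the model $\Va^\vee=\Hom(\Va,\Vect)$. The unit equivalence $\Vect\to \ol{\Vect}^\vee$ sends $V$ to $\ol{V}\otimes(-)$, so the induced pairing is $\langle V,W\rangle=\ol V\otimes W$. This is the same as $\Vect(\ol{V}^\vee,W)$, in agreement with \eqref{eq:innprodishom}. Example~\ref{ex:vect} already shows this is the only inner product up to equivalence. The symmetry isomorphism is then pinned down by comparing it with \eqref{eq:cathermitian} applied to the anti-involution from (1), which yields the canonical flip $\ol V\otimes W\cong \ol{\ol W\otimes V}$.

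For (3), I would trivialize $S_{\Vect}(a,b)=\Vect(b,a)^*$ at the unit. The canonical trivialization of the Serre-type profunctor on $\Vect$ is given by the pairing $\Vect(a,b)\otimes\Vect(b,a)\to\C$, $(f,g)\mapsto\tr(fg)$. The key point is that on $\End(\C)=\C$ it must be the identity, because it is monoidal. Monoidality then forces additivity and multiplicativity under $\otimes$, which forces the standard trace.

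For (4), the compatibility $2$-cell of the $\Z_2^b\times\Z_2^t$ square is monoidal. Translated via \eqref{eq:aiinnerproduct}, it must be built from the monoidal structure of $\ol{(-)}^\vee$, namely $V^\vee\otimes W^\vee\cong(V\otimes W)^\vee$. As a consistency check, I would verify it agrees with \eqref{eq:Z2xZ2} using $\kappa=\tr$.

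I expect the main obstacle to be rigor about \emph{which} coherence data is "canonical". The volution is only sketched here, with details deferred to \cite{2dunitarity}. So the honest argument is a uniqueness one: each piece is a monoidal natural datum on the unit, and monoidal automorphisms of $\id_{\Vect}$ are trivial. In particular, the only candidate sign ambiguity, $J=\pm$ as in the standard-form footnote, is fixed by requiring the value on $\C$ to be the identity.
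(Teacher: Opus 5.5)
Your proposal follows the paper's own implicit argument: the paper gives no separate proof, only the remark that this structure is induced by $\Vect$ being the monoidal unit for a symmetric monoidal action, with coherences deferred to \cite{2dunitarity}. Your rigidity argument makes this precise: monoidal automorphisms of $\id_{\Vect}$ are trivial, so each datum is pinned down by its value at $\C$. One small correction: the ambiguity that actually matters is the inequivalent two-fold choice of compatibility datum in (4) from Example~\ref{ex:HilbZ/2xZ/2} (the sign of the induced form on $\langle \C,\C\rangle$), not the $J\mapsto -J$ of the standard-form footnote, which an isomorphism removes; your normalization at $\C$ settles both all the same.
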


\subsection{\texorpdfstring{$\Z_2^b\times \Z_2^t$}{Z2xZ2}-dagger objects revisited}

Let $\mathcal{V}$ be a $\Z_2^b\times \Z_2^t$-Hermitian $2$-vector space.
Let $F\colon \Vect \to \mathcal{V}$ be a $1$-morphism of $2$-vector spaces.
This functor is equivalent to the data of a single object $F(\C) \in \mathcal{V}$ by additivity of $F$.
Recall that $F^{\dagger_b}$ is defined as the composition
\[
F^{\dagger_b}\colon \mathcal{V} \xrightarrow{D} \ol{\mathcal{V}}^\vee \xrightarrow{\ol{F}^\vee} \ol{\Vect}^\vee \cong \Vect,
\]
where $D$ is the $\Z_2^b$-fixed point data on $\mathcal{V}$.
This implies that there is a natural isomorphism
\[
\langle F(W), x \rangle_{\mathcal{V}} \cong\langle W, F^{\dagger_b}(x) \rangle_{\Vect} = \overline{W} \otimes F^{\dagger_b}(x)  \quad x \in \mathcal{V}, W \in \Vect
\]
of functors $\overline{\Vect} \times \mathcal{V} \to \Vect$.

Recall that $\bbZ^t_2$-fixed point data on $F$ is equivalent to making $F$ an anti-involutive functor.
In the case at hand, we can relate this to $\Z_2$-fixed point data in the anti-involutive category $\mathcal{V}$ itself:

\begin{lem}
Let $\mathcal{V}$ be a $\bbZ^t_2$-Hermitian $2$-vector space.
The equivalence $2\Vect(\Vect, \mathcal{V}) \to \mathcal{V}$ 
of $2$-vector spaces given by $F \mapsto F(\C)$ is an equivalence of $\Z_2^t$-fixed points, i.e.\ an isomorphism in $(\iota_1 2\Vect)^{\Z_2^t}$.
\end{lem}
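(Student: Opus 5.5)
We would first make precise what the $\Z_2^t$-fixed point structure on the hom $2$-vector space $2\Vect(\Vect,\mathcal{V})$ is. For objects $\mathcal{U},\mathcal{V}$ of $2\Vect$ with $\Z_2^t$-Hermitian structures $(d_\mathcal{U},\eta_\mathcal{U})$ and $(d_\mathcal{V},\eta_\mathcal{V})$, the $\Z_2^t$-action on $1$-morphisms sends $F$ to $\overline{F}^{\op}$. Conjugating by the fixed point data gives the anti-involution on $2\Vect(\mathcal{U},\mathcal{V})$:
\[
F \longmapsto d_\mathcal{V}^{-1}\circ \overline{F}^{\op}\circ d_\mathcal{U}.
\]
Here $d_\mathcal{V}^{-1}$ is a quasi-inverse obtained from $\eta_\mathcal{V}$, or we can write $d_\mathcal{V}\circ\overline{F}^{\op}\circ d_\mathcal{U}^{-1}$ using $d^2\cong\id$. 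A natural transformation $\varphi$ goes to $d_\mathcal{V}^{-1}\overline{\varphi}^{\op}d_\mathcal{U}$, which reverses its direction. The involutivity datum is assembled from $\eta_\mathcal{U}$ and $\eta_\mathcal{V}$. A fixed point for this anti-involution is exactly an anti-involutive functor, which matches the earlier identification of $\Z_2^t$-fixed $1$-morphisms. Specializing to $\mathcal{U}=\Vect$ with the anti-involution $d=\overline{(-)}^\vee$ of Lemma~\ref{lem:VectO2herm}(1) gives the anti-involution on $2\Vect(\Vect,\mathcal{V})$.

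Next we would check that $\ev_\C\colon F\mapsto F(\C)$ intertwines the two anti-involutions up to a specified natural isomorphism. We have
\[
\bigl(d_\mathcal{V}\circ\overline{F}^{\op}\circ d_{\Vect}^{-1}\bigr)(\C)= d_\mathcal{V}\bigl(F(\overline{\C}^\vee)\bigr).
\]
The canonical isomorphism $\overline{\C}^\vee\cong\C$ then gives an isomorphism with $d_\mathcal{V}(F(\C))$, natural in $F$. This is the $1$-morphism datum
\[
\ev_\C\circ d_{2\Vect(\Vect,\mathcal{V})}\cong d_\mathcal{V}\circ\overline{\ev_\C}^{\op}.
\]
The single coherence condition for an anti-involutive functor must then be verified. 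It states that applying this isomorphism twice, together with $\eta$ on either side, agrees. On the $\Vect$ side it reduces to the statement that the double-dual evaluation $\C\to\overline{\overline{\C}^\vee}^\vee$ is compatible with $\overline{\C}^\vee\cong\C$ applied twice. This is a one-dimensional check: both maps send $1\mapsto 1$.

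Finally, $\ev_\C$ is an equivalence of underlying $2$-vector spaces, since additive functors out of $\Vect$ are determined by their value on $\C$. A $\Z_2^t$-fixed $1$-morphism whose underlying $1$-morphism is invertible is invertible in $(\iota_1 2\Vect)^{\Z_2^t}$. A quasi-inverse $V\mapsto (-)\otimes V$ inherits anti-involutive structure by transport, and the unit and counit are automatically fixed $2$-isomorphisms after adjusting, since fixedness of $2$-morphisms is only a condition. Alternatively, one can cite the general fact that forgetting fixed point data is conservative.

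The main obstacle is the coherence bookkeeping: making sure the anti-involution on the hom-category is the one induced by the $\Z_2^t$-volution on $2\Vect$ from~\cite{2dunitarity}, including the correct $\eta$, rather than an ad hoc guess. We would handle this by working strictly with the explicit $\Z_2^t$-part $\mathcal{V}\mapsto\overline{\mathcal{V}}^{\op}$ and its (trivial) coherence cells. The only nontrivial cell is then the canonical identification $\overline{\C}^\vee\cong\C$.
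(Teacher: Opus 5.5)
Your proof is correct, and it is essentially the paper's approach. The paper states the lemma without a proof, remarking only that the anti-involution on $2\Vect(\Vect,\mathcal{V})$ is the one induced from those on $\Vect$ and $\mathcal{V}$, chosen so that its fixed points are anti-involutive functors; your construction of the structure isomorphism from $\overline{\C}^\vee\cong\C$, the one-dimensional coherence check against the evaluation map of Lemma~\ref{lem:VectO2herm}, and the conservativity of forgetting fixed-point data fill in exactly what the paper leaves implicit (one detail: the identification $\overline{\C}^\vee\cong\C$ must be the canonical one or a real multiple of it, since rescaling by a non-real $\lambda$ multiplies the coherence by $\lambda/\overline{\lambda}$).
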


Here $2\Vect(\Vect, \mathcal{V})$ is the internal hom in $2\Vect$, which is a $\bbZ^t_2$-fixed point using the given $\bbZ^t_2$-fixed point structure on $\mathcal{V}$ and the canonical $\bbZ^t_2$-fixed point structure on $\Vect$.
This means that the anti-involution on the category $2\Vect(\Vect, \mathcal{V})$ is chosen so that $\Z_2$-fixed points are anti-involutive functors.
The lemma implies that $\iota_0(2\Vect(\Vect, \mathcal{V})) \to \iota_0 \mathcal{V}$ is $\Z_2$-equivariant.
Therefore the following are equivalent for a $1$-morphism $F \colon \Vect \to \mathcal{V}$:
\begin{enumerate}
    \item An anti-involutive structure on $F$.
    \item Giving $c:=F(\C) \in \mathcal{V}$ a $\Z_2$-fixed point structure with respect to the anti-involution on $\mathcal{V}$.
\end{enumerate}

The compatibility between $\Z_2^b$ and $\Z_2^t$ implies that if $F \colon \Vect \to \mathcal{V}$ is a $\bbZ^t_2$-fixed $1$-morphism into a $\Z_2^b \times \Z_2^t$-Hermitian $2$-vector space, the morphism $F^{\dagger_b} F$ also comes with canonical $\Z_2^t$-fixed point data.
In particular, by the above lemma we obtain a nondegenerate Hermitian form on the vector space 
\[
F^{\dagger_b}F(\bbC) \cong \langle \C, F^{\dagger_b} F (\C) \rangle_{\Vect} \cong \langle F(\C), F(\C) \rangle_{\mathcal{V}} = \langle c,c \rangle_{\mathcal{V}}.
\]
We think of the Hermitian form as the data of $F^{\dagger_b} F$ (or $\langle c,c \rangle_{\mathcal{V}}$) being `self-adjoint'. 
The $\Z_2^t$-fixed point structure on $F^{\dagger_b}F$ is described by the diagram
\begin{equation}\label{Eq: Herm} 
    \begin{tikzcd}
        \Vect \ar[r,"F"] \ar[d,"{\overline{(-)}^\vee}"] & \mathcal{V} \ar[d,"d"] \ar[r,"D"] \ar[rrr,"F^{\dagger_b}", bend left] & \overline{\mathcal{V}}^\vee \ar[r,"\overline{F}^\vee"] & \overline{\Vect}^\vee & \Vect \ar[d,"{\overline{(-)}^\vee}"] \ar[l,"\sim"]
        \\
        \overline{\Vect}^{\op} \ar[r,"{\overline{F}^{\op}}"] & \overline{\mathcal{V}}^{\op} \ar[r,"{\overline{D}^\vee}"] & \overline{\overline{\mathcal{V}}^{\op}}^\vee \ar[u,"{\overline{d}^\vee}"] \ar[r,"{\ol{\overline{F}^\vee}^{\op}}"] & \overline{\overline{\Vect}^{\op}}^\vee \ar[u]  & \overline{\Vect}^{\op} \ar[l,"\sim"]
    \end{tikzcd}
\end{equation}
    Note that, as in Example \ref{sec:2Herm} we are implicitly identifying $(\mathcal{V}^{\op})^\vee \cong (\mathcal{V}^\vee)^{\op}$ using the dual on $\Vect$.
    From left to right the commutation data is as follows:
    \begin{itemize}
        \item The first square corresponds to the $\Z_2^t$-fixed point structure on $F$.
        \item The second square is the compatibility of the $\Z_2^t$-Hermitian and $\Z_2^b$-Hermitian $2$-vector space structures on $\mathcal{V}$.
        \item The third square is the functor $\overline{(-)}^{\vee}$ applied to the $\Z_2^t$ fixed point structure on $F$
        \item The fourth square is the canonical $\Z_2^b \times \Z_2^t$-Hermitian $2$-vector space structure on $\Vect$ (Lemma \ref{lem:VectO2herm}).
    \end{itemize}

\begin{lem}
\label{lem:categorifyherm}
    Let $h_c: c \to dc$ be a $\Z_2$-fixed point corresponding to an anti-involutive structure on $F \colon \Vect \to \mathcal{V}$.
The data of $F^{\dagger_b} F: \Vect \to \Vect$ being anti-involutive corresponds to the Hermitian form on $\langle c, c \rangle \in \Vect$ given by
\[
\langle c,c \rangle \cong \langle dc, dc \rangle \cong \overline{\langle c,c \rangle}^\vee,
\]
where the first isomorphism comes from $h$ and the second isomorphism is \eqref{eq:aiinnerproduct}.
\end{lem}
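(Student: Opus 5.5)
The plan is to compute the pasting of the four squares in diagram \eqref{Eq: Herm} after evaluating at the object $\C\in\Vect$. The result is an isomorphism $F^{\dagger_b}F(\C) \to \overline{F^{\dagger_b}F(\C)}^\vee$, which by the preceding lemma (applied to $\mathcal{V}=\Vect$) is the Hermitian form encoding the anti-involutive structure on $F^{\dagger_b}F$. This isomorphism is then transported along
\[
F^{\dagger_b}F(\C)\cong \langle \C, F^{\dagger_b}F(\C)\rangle_{\Vect}\cong\langle F(\C),F(\C)\rangle_{\mathcal{V}}=\langle c,c\rangle .
\]
The work is to identify each square's contribution, evaluated at $\C$, with a named isomorphism.

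In order, I would proceed as follows.

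\emph{Fourth square.} By Lemma \ref{lem:VectO2herm}(1),(4), the canonical structure on $\Vect$ evaluated at $\C$ is the identity, after identifying $\overline{\C}^\vee\cong\C$. So it contributes nothing beyond the identification $\langle \C,W\rangle_{\Vect}=\overline{\C}\otimes W\cong W$.

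\emph{First and third squares.} Under the equivalence $F\mapsto F(\C)$, which is $\Z_2^t$-equivariant by the preceding lemma, the first square at $\C$ is exactly $h_c\colon c\to dc$. The third square is $\overline{(-)}^\vee$ applied to the same datum. After rewriting $\overline{F}^\vee\circ D$ as $\langle F(-),-\rangle_{\mathcal{V}}$, the first and third squares together produce the map $\langle c,c\rangle\to\langle dc,dc\rangle$ given by functoriality of $\langle-,-\rangle$ in both variables applied to $h_c$. Here I have to check that the third square inserts $\overline{h_c}^\vee$ in the first slot, i.e.\ the conjugate-dual variable. Using $D\boxtimes\id$ followed by $\ev$, this amounts to $h_c$ in the first variable of $\langle-,-\rangle$.

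\emph{Second square.} This is by definition the compatibility 2-cell whose translation to inner products is \eqref{eq:aiinnerproduct}, here evaluated at $(c,c)$. Composing the three contributions gives $\langle c,c\rangle\cong\langle dc,dc\rangle\cong\overline{\langle c,c\rangle}^\vee$, as claimed.

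I expect the main obstacle to be the first-and-third-squares step: making the whiskerings precise. The squares are 2-cells between composites of 2-functors, and I must make sure the pasting, read through $F^{\dagger_b}=\overline{F}^\vee\circ D$ and the adjunction-type isomorphism $\langle F(W),x\rangle\cong\overline W\otimes F^{\dagger_b}(x)$, really yields $h_c$ applied in both variables rather than, say, its inverse or $\eta$-twisted version in one slot. I would first try to handle this by reducing via additivity to the single object $\C$, writing every functor out of $\Vect$ as $W\mapsto W\otimes(\text{object})$. Each 2-cell is then determined by its component at $\C$, and the implicit identifications $(\mathcal{V}^{\op})^\vee\cong(\mathcal{V}^\vee)^{\op}$ and $\overline{\Vect}^\vee\cong\Vect$ become explicit canonical isomorphisms of vector spaces whose compatibility can be checked directly.
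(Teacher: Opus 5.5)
Your proposal is correct and takes the same approach as the paper. The paper's proof is only the instruction to write out the data of the pasting diagram \eqref{Eq: Herm}, and you carry out that computation square by square at $\C$; the whiskering bookkeeping you flag as the main obstacle is exactly what the paper leaves implicit.
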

\begin{proof}
Writing out the data in Diagram~\eqref{Eq: Herm} gives the result.
\end{proof}

\subsection{\texorpdfstring{$O(2)$}{O(2)}-dagger objects revisited}

In order to connect to $2$-Hilbert spaces, we now focus on the case where $\mathcal{V}$ is an $O(2)$-fixed point.
Recall from Example \ref{sec:2Herm} that the categorified inner product on $\mathcal{V}$ is then given in terms of the anti-involution as 
\[
\ol{\mathcal{V}} \boxtimes \mathcal{V} \to \Vect \quad (x,y) \mapsto \Hom_{\mathcal{V}}(dx,y).
\]
We thus see that $F^{\dagger_b} F: \Vect \to \Vect$ is given by tensoring with $\Hom(dc,c)$.

We describe the corresponding data explicitly in the following Lemma:

\begin{lem}
Let $h_c: c \to dc$ be a $\Z_2$-fixed point corresponding to an anti-involutive structure on $F$.
The data of $F^{\dagger_b} F: \Vect \to \Vect$ being anti-involutive corresponds to the Hermitian form on $\Hom(dc,c) \in \Vect$ given by
\[
\Hom(dc,c) \xrightarrow{h_c} \Hom(c,dc) \xrightarrow{d} \overline{\Hom(d^2 c, dc)} \xrightarrow{\eta} \overline{\Hom(c,dc)} \xrightarrow{\kappa} \ol{\Hom(dc,c)}^\vee
\]
where the first isomorphism both composes and precomposes with $h_c$, and $\kappa$ is induced by the Calabi-Yau structure on $\mathcal{V}$.
\end{lem}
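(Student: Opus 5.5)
The plan is to specialize Lemma~\ref{lem:categorifyherm} to the $O(2)$-Hermitian case and unpack its two isomorphisms using the explicit descriptions from \S\ref{sec:2Herm}. Lemma~\ref{lem:categorifyherm} already tells us that the anti-involutive structure on $F^{\dagger_b}F$ is the Hermitian form
\[
\langle c,c\rangle \cong \langle dc,dc\rangle \cong \overline{\langle c,c\rangle}^\vee .
\]
The first map is induced by $h_c$, and the second is \eqref{eq:aiinnerproduct}. For an $O(2)$-fixed point the inner product is $\langle x,y\rangle = \Hom_{\mathcal{V}}(dx,y)$ by \eqref{eq:innprodishom}, so $\langle c,c\rangle = \Hom(dc,c)$. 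It therefore suffices to identify each of the two isomorphisms in these terms and check that their composite is the displayed chain.

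The first step is the isomorphism $\langle c,c\rangle\cong\langle dc,dc\rangle$. Functoriality of the inner product in both variables, applied with $h_c$, gives $\Hom(dc,c)\to \Hom(d^2c,dc)$, obtained by post-composing with $h_c$ and pre-composing with $d(h_c)^{-1}$. Using $\eta\colon c\cong d^2c$ and the fixed point condition on $h_c$ (namely $d(h_c)\circ\eta = h_c$, up to the conventions of \cite{MR4845668}), we can rewrite the target as $\Hom(c,dc)$ via pre-composition with $h_c$. This yields the first arrow, which both pre- and post-composes with $h_c$. The only care needed here is to track where $\eta$ is absorbed.

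The second step is the isomorphism \eqref{eq:aiinnerproduct} evaluated at $(c,c)$, which in the $O(2)$ case is the composite \eqref{eq:Z2xZ2}. This composite consists of the Hermitian-swap map \eqref{eq:cathermitian}, namely $\Hom(dx,y)\xrightarrow{d}\overline{\Hom(dy,d^2x)}\xrightarrow{\eta}\overline{\Hom(dy,x)}$, followed by the Calabi--Yau duality $\kappa\colon \Hom(y,dx)^\vee\cong\Hom(dx,y)$, with the roles of the variables arranged appropriately. Applied with $x=c$ and $y=dc$, as required after the first step, this produces $\Hom(c,dc)\xrightarrow{d}\overline{\Hom(d^2c,dc)}\xrightarrow{\eta}\overline{\Hom(c,dc)}\xrightarrow{\kappa}\overline{\Hom(dc,c)}^\vee$. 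Concatenating the two steps then gives exactly the stated chain.

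The main obstacle is the bookkeeping in the second step. One has to confirm that the isomorphism \eqref{eq:aiinnerproduct} induced by the $O(2)$-structure really is \eqref{eq:Z2xZ2}, with $\kappa$ entering in the direction $\overline{\Hom(c,dc)}\to\overline{\Hom(dc,c)}^\vee$ after conjugation, and that the implicit identifications $\overline{(-)^\vee}\cong\overline{(-)}^\vee$ from Lemma~\ref{lem:VectO2herm} introduce no extra signs. I would first check this directly on $\mathcal{V}=\Vect$ with $c=\C$, where all maps are explicit, and then argue naturality in $c$ to conclude in general.
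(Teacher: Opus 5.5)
Your proposal is essentially the paper's proof, which is the one-line remark that the statement follows from Lemma~\ref{lem:categorifyherm} after substituting $\langle x,y\rangle=\Hom(dx,y)$ together with \eqref{eq:cathermitian} and \eqref{eq:Z2xZ2}. There are three minor bookkeeping fixes: functoriality in the first slot pre-composes with $d(h_c)\colon d^2c\to dc$, not with its inverse; the relevant instance of \eqref{eq:cathermitian} is at $x=y=dc$, with $d^2c\cong c$ via $\eta$, since your choice $x=c$, $y=dc$ would start from $\Hom(dc,dc)$; and the reduction to $\Vect$ by naturality in $c$ is unnecessary, and would not by itself cover a general $\mathcal{V}$, because the paper already takes \eqref{eq:Z2xZ2} as the form of \eqref{eq:aiinnerproduct} in the $O(2)$ case.
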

\begin{proof}
Consequence of Lemma \ref{lem:categorifyherm} using Equations \eqref{eq:Z2xZ2} and \eqref{eq:cathermitian} for the $\Z_2^b\times\Z_2^t$-fixed point data on $\mathcal{V}$ induced by its $O(2)$-fixed point data.
\end{proof}
We can now define $F^{\dagger_b}F$ to be `positive' when the Hermitian form on $F^{\dagger_b}F(\C)$ is positive definite. Part of the definition of $\Z_2^t\times \Z_2^b$- and $O(2)$-dagger objects is a lift of the $\Z_2^t$-volution to a dagger structure. This lift is the same as the choice of $\Z_2$-fixed point data on each object of $\Va$. The previous discussion gives us a canonical way of picking those leading to the following definition. 
\begin{defn}
\label{def:G2Hilb}
Let $G$ be $\Z_2^b \times \Z^t_2$ or $O(2)$. A \emph{$G$ $2$-Hilbert space} is a $2$-vector space $\Va$ together with a $G$-fixed point structure and a collection of $\Z_2$-fixed point structures on every object of $\Va$ such that the corresponding Hermitian vector space $F^{\dagger_b}\circ F$ is a Hilbert space. 
\end{defn}

This reproduces the definitions of Section~\ref{sec:2Hilb} in a conceptual way. In the case of 2-Hilbert spaces, this choice turns out not to be any additional information: 
\begin{cor}
    Let $\mathcal{V}$ be an $O(2)$-Hermitian $2$-vector space.  
    Then $\mathcal{V}$ is a $2$-Hilbert space if and only if for every object $c \in \mathcal{V}$ there exists $\bbZ_2^t$-fixed point data on the corresponding linear functor $F\colon \Vect \to \mathcal{V}$ such that the Hermitian vector space $F^{\dagger_b} F(\bbC)$ is a Hilbert space.  
\end{cor}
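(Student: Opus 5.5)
The plan is to reduce everything to the explicit Hermitian form from the preceding Lemma and then compare with the Baez condition, which says that $\tau(f^\dagger\circ g)$ is positive definite on all hom spaces. Recall what that Lemma gives. Fix $c\in\mathcal{V}$ and a $\Z_2$-fixed point $h_c\colon c\to dc$, i.e.\ an anti-involutive structure on $F$ with $F(\C)=c$. Then $F^{\dagger_b}F(\C)\cong\Hom(dc,c)$, with Hermitian form
\[
(u,v)\mapsto \kappa\bigl(\eta\, d(h_c u h_c),\, v\bigr).
\]
Set $f:=h_c^{-1}\circ u\in\End(c)$ and use the dagger $f^\dagger := h_c^{-1}\circ d(f)\circ h_c$ determined by $h_c$. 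Unwinding the form should give
\[
(u,v)=\tau(f^\dagger g),\qquad g=h_c^{-1}v,
\]
possibly up to composing with $h_c$ on the other side. So the first step is to verify this identity: under the isomorphism $\Hom(dc,c)\cong\End(c)$ induced by $h_c$, the form on $F^{\dagger_b}F(\C)$ is the form $\tau(f^\dagger\circ g)$ on $\End(c)$. This uses only the definitions of $\kappa$ and $\eta$ and the compatibility $\kappa(f,g)=\overline{\kappa(d f,d g)}$.

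For ($\Rightarrow$), suppose $\mathcal{V}$ is a 2-Hilbert space, with dagger structure $\dagger$ lifting $d$. For every object $c$, the dagger structure is the choice of $\Z_2$-fixed points making $f^\dagger$ the Hilbert adjoint. Take $h_c$ to be this chosen fixed point. By the identity above, the form on $F^{\dagger_b}F(\C)$ is $\tau(f^\dagger g)$ on $\End(c)$, which is positive definite by the Baez 2-Hilbert axiom.

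For ($\Leftarrow$), choose for each $c$ a fixed point $h_c$ making $\End(c)$ with $\tau(f^\dagger g)$ a Hilbert space. The main obstacle is that positivity on endomorphism spaces, with independently chosen $h_c$ for each object, must be assembled into a global dagger structure under which all hom spaces $\Hom(a,b)$ are positive. I would handle it in three steps.
\begin{enumerate}
\item Since positivity of $\tau(f^*f)$ on $\End(c)$ makes it an $\rmH^*$-algebra, hence a $\rmC^*$-algebra, I first show that for simple $c$ the positivity forces $h_c$ to be unique up to a positive scalar. Indeed, $\End(c)=\C$ and the two fixed points differ by a sign, only one of which is positive, as in the $A=\C$ argument of Section \S\ref{sec:Alg}.
\item Using semisimplicity, I then pick $h$ on a set of simples and extend it to direct sums orthogonally. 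This defines a dagger category structure, since the dagger on $\mathcal{V}$ corresponds to a full subgroupoid of fixed points, by \cite{MR4845668}.
\item Finally, for arbitrary $a,b$ I embed $\Hom(a,b)$ into $\End(a\oplus b)$. The restriction of the positive form there is $\tau(f^\dagger g)$ on $\Hom(a,b)$, which gives the Baez positivity.
\end{enumerate}
The key subtlety is to check that the fixed point structure on $a\oplus b$ used in the last step can be taken to be the direct sum of the chosen ones and still be positive. I would argue this by noting that any positive $h_{a\oplus b}$ differs from $h_a\oplus h_b$ by a positive invertible element of the $\rmC^*$-algebra $\End(a\oplus b)$, and then conjugating by its square root to make the two agree.
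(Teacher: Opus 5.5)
\textbf{Verdict.} Your overall plan is right, but the $(\Leftarrow)$ direction has a genuine gap: the positivity of the assembled dagger structure rests on two false claims. It is repairable by a direct computation.

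\textbf{What is fine.} Identifying the form on $F^{\dagger_b}F(\C)\cong\Hom(dc,c)$ with $\tau(f^\dagger g)$ on $\End(c)$ works; take $f=u\circ h_c$, since $h_c^{-1}\circ u$ does not typecheck. Your $(\Rightarrow)$ direction is also fine. The paper gives no separate proof and treats the corollary as immediate from the preceding Lemma, so this part matches its intent.

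\textbf{Where it fails.} Step (1) of $(\Leftarrow)$ is false. The Lemma's first map pre- and post-composes with $h_c$, so the form is quadratic in $h_c$. On a simple $c$ the fixed points are exactly $\lambda h_c$ with $\lambda\in\R^\times$, and replacing $h_c$ by $\lambda h_c$ multiplies the form by $\lambda^2$. Equivalently, for simple $c$ the form on $\End(c)=\C$ is $\tau_c(\id_c)\,\bar z w$, independent of $h_c$. So positivity never selects a sign, and the hypothesis at $a\oplus b$ says nothing about how $h_a$ and $h_b$ are related.

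Your resolution of the ``key subtlety'' therefore also fails. If $h$ and $h'=hp$ are fixed points on the same object, then $p$ is invertible and self-adjoint for the $*$-structure of $h$, but nothing forces it to be positive. A concrete counterexample:
\begin{itemize}
\item Work in $\Vect$ with $d=\ol{(-)}^\vee$ and $\tau=\tr$, and take $a=b=\C$ with $h_a=1$, $h_b=-1$.
\item Each of $h_a,h_b$ gives a positive form, and the standard Hermitian form on $\C^2$ is a positive fixed point on $a\oplus b$.
\item Yet $h_a\oplus h_b=\mathrm{diag}(1,-1)$ differs from it by $p=\mathrm{diag}(1,-1)$.
\item For the matrix unit $f=E_{12}$ and $f^*$ the adjoint for $\mathrm{diag}(1,-1)$, one gets $\tau(f^*f)=-1$.
\end{itemize}
Even with consistent choices, $-(h_a\oplus h_b)$ is positive and differs from $h_a\oplus h_b$ by $-1$. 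So the square-root argument cannot be what proves positivity.

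\textbf{The fix.} Compute directly instead of comparing with an arbitrary positive $h_{a\oplus b}$. On simple objects, the hypothesis says exactly two things: each simple $c$ admits a fixed point (so $dc\cong c$), and $\tau_c(\id_c)>0$.

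Now carry out your step (2) with one fixed point $h_{c_i}$ per isomorphism class of simples, used on every copy. For $a=\bigoplus_i c_i^{\oplus n_i}$ and $b=\bigoplus_i c_i^{\oplus m_i}$ with the orthogonal fixed points, $\Hom(a,b)\cong\bigoplus_i M_{m_i\times n_i}(\C)$. The dagger $f^\dagger=h_a^{-1}d(f)h_b$ is the conjugate transpose, because $d(\lambda\,\id_{c_i})=\bar\lambda\,\id_{dc_i}$ and the same $h_{c_i}$ sits at both ends. By additivity and cyclicity of the Calabi--Yau trace, $\tau(f^\dagger g)=\sum_i\tau_{c_i}(\id_{c_i})\tr(f_i^\dagger g_i)$, which is positive definite. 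This gives the Baez condition on every hom space at once, and shows the hypothesis is only needed on simple objects.
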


\begin{rem}
    We expect there is a criterion analogous to Definition \ref{def:G2Hilb} that selects $G$-dagger $1$-morphisms in $2\Vect$ in agreement with Section~\ref{sec:2Hilb}.
It would be interesting to work this out in more detail, and study its potential generalization to dagger $k$-morphisms in $n\Vect$.
\end{rem}

\subsection{\texorpdfstring{$O(n)$-dagger $n$-vector spaces}{O(n)-dagger n-vector spaces}}

We conclude this section by speculating about generalizations to higher categories, which we expect to lead to a satisfactory definition of $n$-Hilbert spaces. 
There is an equivalence $\End_{n\Vect} (\mathbbm{1}_{n\Vect}) \cong (n-1)\Vect$ of $(n-1)$-categories where $\mathbbm{1}_{n\Vect} \in n\Vect$ is the monoidal unit.
Moreover, the $O(n-1)$-action on $\iota_1 n\Vect$ gets mapped to the $O(n-1)$-action on $\iota_0 (n-1)\Vect$ under this identification.
By induction, we know how to lift $O(n-1)$-fixed points for the latter action to $(n-1)$-Hilbert spaces, and therefore we can identify which $O(n-1)$-fixed $1$-morphisms $\mathbbm{1}_{n\Vect} \to \mathbbm{1}_{n\Vect}$ are given by `tensoring with an $(n-1)$-Hilbert space'.\footnote{When everything is in place, we expect this to lead to an equivalence $\End_{n\Hilb}(\mathbbm{1}_{n\Hilb}) \cong (n-1)\Hilb$ of $O(n-1)$-dagger categories.}

This observation allows us to define inductively the notion of $n$-Hilbert space following the reasoning above. 
Let $\mathsf{C} \in n\Vect$ be an $O(n)$-fixed point of which we want to know whether it is $n$-Hilbert. 
Let $F \colon \mathbbm{1}_{n\Vect}\to \mathsf{C}$ be a $1$-morphism with $O(n-1)$-fixed point data.
The remaining $\Z_2^b$-fixed point data on $\mathsf{C}$ allows us to define $F^{\dagger_b} \colon \mathsf{C} \to \mathbbm{1}_{n\Vect}$.
We then obtain the $O(n-1)$-fixed point $F^{\dagger_b} F \in \End(\mathbbm{1})$, which corresponds to an $O(n-1)$-fixed point in $(n-1)$-vector spaces.
Concretely, we think of $F$ as an element $c \in \mathsf{C}$ and we want to know whether $\langle c,c \rangle_{\mathsf{C}} \in (n-1)\Vect$ is an $(n-1)$-Hilbert space, where $\langle-,-\rangle_{\mathsf{C}}$ denotes the $(n-1)$-vector space valued inner product on $\mathsf{C}$.
Now we can define an $n$-Hilbert space to be an $O(n)$-fixed point in $n$-vector spaces together with for every $1$-morphism $F\colon 1 \to \mathsf{C}$ the data of an $O(n-1)$-fixed point and a lift of $F^{\dagger_b} F$ to an $(n-1)$-Hilbert space. This will inductively impose $O(n-k)$ Hilbert space structures on the $k$-morphisms in $\mathsf{C}$ as well.

{Observe that: 
\begin{enumerate}
    \item for $n=1$, it is a condition for an $O(1)$-fixed point to be a Hilbert space;
    \item for $n=2$, the data of lifting an $O(2)$-fixed point $\mathcal{V}$ to a $2$-Hilbert space is given by $O(1)$-fixed points $F \colon 1 \to \mathcal{V}$ for all objects of $\mathcal{V}$ satisfying a condition that a certain $O(1)$-fixed $1$-vector space is a Hilbert space;
    \item for $n=3$, the $O(2)$-fixed point data on $F \colon 1 \to \mathfrak{V}$ gives an $O(2)$-fixed $2$-vector space and hence it is yet further data to ask it to be $2$-Hilbert space.
\end{enumerate}
In the third case we conjecture:}

\begin{conj}
\label{conj:O3FixedPoint}
An $O(3)$-fixed point $\fX\in 3\Vect$ 
equipped with an $O(2)$-fixed point structure on every $F: 2\Vect\to \fX$ and a lift of $F^{\dag_b}F$ to a 2-Hilbert space 
exactly corresponds to a 3-Hilbert space as defined in \cite{2410.05120}.
\end{conj}

\bibliographystyle{alpha}
{\footnotesize{
\bibliography{bibliography}
}}
\end{document}